\documentclass[12pt]{amsart}
\usepackage{graphicx,amssymb}
\usepackage[usenames]{color}
\usepackage{amsthm,amsfonts,amsmath,amssymb,latexsym,epsfig,mathrsfs,yfonts,marvosym,latexsym,epsfig}
\usepackage[all]{xy}
\AtEndDocument{\vfill\eject\batchmode}
\addtolength{\textheight}{2cm}
\addtolength{\textwidth}{3cm}
\addtolength{\topmargin}{-1cm}
\addtolength{\oddsidemargin}{-1.5cm}
\addtolength{\evensidemargin}{-1.5cm}

\DeclareMathAlphabet\oldmathcal{OMS}        {cmsy}{b}{n}
\SetMathAlphabet    \oldmathcal{normal}{OMS}{cmsy}{m}{n}
\DeclareMathAlphabet\oldmathbcal{OMS}       {cmsy}{b}{n}
\usepackage[active]{srcltx}
\usepackage{eucal}

\newtheorem{theorem}{Theorem}[section]

\newtheorem{proposition}[theorem]{Proposition}

\newtheorem{def/prop}[theorem]{Definition/Proposition}

\theoremstyle{definition}
\newtheorem{definition}[theorem]{Definition}
\newtheorem{remark}[theorem]{Remark}
\newtheorem*{ack}{Acknowledgements}
\newtheorem{example}{Example}[section]

\DeclareSymbolFont{bbold}{U}{bbold}{m}{n}
\DeclareSymbolFontAlphabet{\mathbbold}{bbold}

\def\BOne{\mathchoice{\scalebox{1.16}{$\displaystyle\mathbbold 1$}}{\scalebox{1.16}{$\textstyle\mathbbold 1$}}{\scalebox{1.16}{$\scriptstyle\mathbbold 1$}}{\scalebox{1.16}{$\scriptscriptstyle\mathbbold 1$}}}
\def\fract#1#2{\raise4pt\hbox{$ #1 \atop #2 $}}

\def\bbc{{\mathbb C}}

\def\bbn{{\mathbb N}}

\def\bbp{{\mathbb P}}
\def\bbq{{\mathbb Q}}
\def\bbr{{\mathbb R}}

\def\bbt{{\mathbb T}}

\def\bbz{{\mathbb Z}}

\def\gra{\alpha}

\def\gre{\epsilon}

\def\grk{\kappa}

\def\gro{\omega}

\def\grr{\rho}

\def\grD{\Delta}

\def\grO{\Omega}

\def\grS{\Sigma}

\def\bfk{{\bf k}}

\def\bfn{{\bf n}}

\def\bfw{{\bf w}}

\def\cala{{\mathcal A}}

\def\cald{{\mathcal D}}

\def\calo{{\mathcal O}}

\def\cals{{\oldmathcal S}}

\def\calw{{\mathcal W}}

\def\calS{{\mathcal S}}

\def\gt{{\mathfrak t}}

\def\gw{{\mathfrak w}}

\def\gz{{\mathfrak z}}

\def\gC{{\mathfrak C}}

\def\gR{{\mathfrak R}}

\def\lra{\longrightarrow}

\def\<{\langle}
\def\>{\rangle}
\def\ra#1{\to}

\def\fract#1#2{\raise4pt\hbox{$ #1 \atop #2 $}}
\def\decdnar#1{\phantom{\hbox{$\scriptstyle{#1}$}}
\left\downarrow\vbox{\vskip15pt\hbox{$\scriptstyle{#1}$}}\right.}

\def\hook{\mathbin{\hbox to 6pt{%
                 \vrule height0.4pt width5pt depth0pt
                 \kern-.4pt
                 \vrule height6pt width0.4pt depth0pt\hss}}}

\begin{document}

\title{Sasakian Geometry on Sphere Bundles II: Constant Scalar Curvature}

\author[Charles Boyer]{Charles P. Boyer}
\address{Charles P. Boyer, Department of Mathematics and Statistics,
University of New Mexico, Albuquerque, NM 87131, USA.}
\email{cboyer@math.unm.edu} 
\author[Christina T{\o}nnesen-Friedman]{Christina W. T{\o}nnesen-Friedman}
\address{Christina W. T{\o}nnesen-Friedman, Department of Mathematics, Union
College, Schenectady, New York 12308, USA }
\email{tonnesec@union.edu}
\thanks{The authors were partially supported by grants from the
Simons Foundation, CPB by (\#519432), and CWT-F by (\#422410)}
\date{\today}

\begin{abstract}
In a previous paper \cite{BoTo20b} the authors employed the fiber join construction of Yamazaki \cite{Yam99} together with the admissible construction of Apostolov, Calderbank, Gauduchon, and T{\o}nnesen-Friedman \cite{ACGT08} to construct new extremal Sasaki metrics on odd dimensional sphere bundles over smooth projective algebraic varieties. In the present paper we continue this study by applying a recent existence theorem \cite{BHLT22} that shows that under certain conditions one can always obtain a constant scalar curvature Sasaki metric in the Sasaki cone. Moreover, we explicitly describe this construction for certain sphere bundles of dimension 5 and 7.

\end{abstract}

\maketitle\
\vspace{-7mm}



\section{Introduction}\label{intro}
A central problem in Riemannian geometry is to determine conditions for a metric to have constant scalar curvature. This is particularly true in K\"ahler geometry as well as in its odd dimensional sister Sasaki geometry. Specifically, we combine our construction of extremal Sasaki metrics on odd dimensional sphere bundles \cite{BoTo20b} using Yamazaki's fiber join \cite{Yam99} with the admissible conditions \cite{ACGT08} as applied in \cite{BHLT22} to obtain constant scalar curvature (CSC) Sasaki metrics. This involves the introduction of a refinement of the admissibility conditions that we call {\it strongly admissible} whose precise definition is given below in \ref{stradmdef}. Previously \cite{BoTo20b}, we gave a stronger condition called {\it super admissible}; however, we show here that the less stringent condition, strongly admissible, is enough. Explicitly, in Section \ref{mainthm} we prove our main theorem which is Theorem \ref{2ndroundexistence} and is restated here for the convenience of the reader.

\begin{theorem}\label{2ndroundexistenceintro}
Let $M_\gw$ be a strongly admissible Yamazaki fiber join whose regular quotient is a ruled manifold of the form $\bbp(E_0\oplus E_\infty)\longrightarrow N$ where $E_0 ,E_\infty$ are projectively flat hermitian holomorphic vector bundles on $N$ of complex dimension $(d_0+1),(d_\infty+1)$ respectively, and $N$ is a local K\"ahler product of non-negative CSC metrics. Then $\gt^+_{sph}$ has a $2$-dimensional subcone of extremal Sasaki metrics (up to isotopy) which contains at least one ray of CSC Sasaki metrics.
\end{theorem}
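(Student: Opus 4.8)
The plan is to realize the extremal Sasaki metrics via the combination of the Yamazaki fiber join structure with the admissible construction of \cite{ACGT08}, and then to invoke the existence theorem of \cite{BHLT22} to upgrade one ray to CSC. First I would recall, from the analysis in \cite{BoTo20b} and the strong admissibility hypothesis \ref{stradmdef}, that the fiber join $M_\gw$ carries a natural two-dimensional family of Sasaki structures in $\gt^+_{sph}$ whose transverse geometry is the admissible K\"ahler geometry on the ruled manifold $S_{\mathbf m} := \bbp(E_0\oplus E_\infty)\lra N$. Because $E_0, E_\infty$ are projectively flat and $N$ is a local K\"ahler product of CSC metrics with non-negative scalar curvature, the admissible data $(\mathbf m, \Omega)$ is in fact \emph{admissible in the strict sense} of \cite{ACGT08}: the base metrics on the factors of $N$ are CSC K\"ahler, so the admissible momentum construction on the $\mathbb{P}^1$-fiber direction (together with the $\mathbb{P}^{d_0}$- and $\mathbb{P}^{d_\infty}$-directions coming from the projectively flat bundles) produces, for each admissible K\"ahler class, an explicit extremal K\"ahler metric by solving the relevant ODE boundary-value problem. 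Transverse to the Reeb foliation of each Sasaki structure in the subcone, this gives an extremal transverse K\"ahler metric, hence an extremal Sasaki metric; this accounts for the 2-dimensional subcone of extremal Sasaki metrics up to isotopy.

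Next I would identify which ray within this subcone is CSC. The point is that the CSC condition in the Sasaki setting is equivalent, after the $\calD$-homothety/transverse-homothety normalization, to the vanishing of the \emph{Futaki invariant} of the transverse holomorphic structure restricted to the subcone, equivalently to the existence of an extremal transverse metric with constant (rather than merely Killing-potential) scalar curvature. I would appeal to \cite{BHLT22}: that existence theorem asserts precisely that under the admissibility hypotheses in force here — projectively flat bundles over a local K\"ahler product of non-negative CSC metrics — the admissible ray always contains a CSC representative, because the relevant one-variable polynomial obstruction (the Futaki-type functional reduced to the ODE data) has a root in the admissible range. Concretely, one parametrizes the ray by the transverse K\"ahler class (a single parameter once the fiber-join/admissible normalization is fixed), writes the transverse scalar curvature as a rational function of that parameter via the admissible ansatz, and shows the CSC equation — a polynomial equation in the parameter with coefficients controlled by $d_0, d_\infty$, $\dim N$, and the scalar curvatures of the $N$-factors — admits an admissible solution; non-negativity of the CSC of $N$ is exactly what guarantees the sign conditions needed for solvability. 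This produces at least one CSC ray inside the 2-dimensional extremal subcone.

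The main obstacle, and the step requiring genuine care, is the \emph{compatibility check}: verifying that the CSC existence result of \cite{BHLT22}, which is stated for admissible K\"ahler manifolds (or orbifolds) of the relevant product-projectivization type, applies verbatim to the transverse geometry of the \emph{strongly} admissible Yamazaki fiber join — rather than the stronger \emph{super} admissible hypothesis used in \cite{BoTo20b}. This amounts to checking that strong admissibility (Definition \ref{stradmdef}) supplies exactly the hypotheses \cite{BHLT22} needs: that the regular quotient is of the form $\bbp(E_0\oplus E_\infty)\to N$ with the stated bundle and base conditions, that the polarizing class lies in the admissible cone, and that the positivity constraints on the admissible constants $x_0, x_\infty$ (encoding the fiber-join data $\mathbf w$) are met. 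Once this dictionary between strong admissibility and the \cite{BHLT22} input is established, the extremality of the whole 2-dimensional subcone follows from the ACGT existence of extremal admissible metrics, and the CSC ray follows from \cite{BHLT22}; I would close by noting that distinct points of the subcone give non-isometric (indeed non-isotopic as Sasaki structures, after accounting for transverse homothety) metrics, so the subcone is genuinely 2-dimensional up to isotopy.
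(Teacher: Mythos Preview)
Your proposal is correct and follows essentially the same route as the paper: use strong admissibility to ensure the regular quotient of $M_\gw$ is an admissible bundle $\bbp(E_0\oplus E_\infty)\to N$ with admissible transverse K\"ahler class, then invoke Theorem~3.1 of \cite{BHLT22} (stated here as Theorem~\ref{bhlt22}) to obtain the CSC ray together with the surrounding $2$-dimensional extremal subcone. The paper's argument is in fact quite terse---it simply observes that the proof of \cite[Theorem 3.1]{BHLT22} already produces a $2$-dimensional subcone exhausted by extremal Sasaki metrics containing the CSC ray, and that this subcone sits inside $\gt^+_{sph}$ because it is generated by Killing potentials coming from the fiberwise $S^1$-action on the admissible bundle; your more detailed unpacking of the admissible ODE, the Futaki-type obstruction, and the ``compatibility check'' that strong (rather than super) admissibility suffices is a faithful expansion of what the paper leaves implicit in its citation.
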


Then in Section \ref{Exsect} we present detailed descriptions of strongly admissible sphere bundles of dimension 5 and 7 which are obtained by Yamazaki's fiber join construction. We obtain existence results (Propositions \ref{Mkprop}, \ref{2highergenus} and \ref{polyprop}) of CSC and extremal Sasaki metrics even when the conditions of Theorem \ref{2ndroundexistenceintro} are not all met.

\begin{ack}
The authors would like to thank Claude LeBrun for pointing out \cite{Shu}.  We also would like to thank Eveline Legendre  and Hongnian Huang for fruitful conversations.
\end{ack}

\section{Brief Review of K-Contact and Sasaki Geometry}
Recall that an oriented and co-oriented contact manifold $M^{2n+1}$ has a contact metric structure $\cals=(\xi,\eta,\Phi,g)$ where $\eta$ is a contact form with contact bundle $\cald=\ker\eta$, $\xi$ is its Reeb vector field, $J=\Phi |_\cald$ is an almost complex structure on $\cald$, i.e. $(\cald,J)$ is an almost CR contact structure, and $g=d\eta\circ (\BOne \times\Phi) +\eta\otimes\eta$ is a compatible Riemannian metric. $\cals$ is {\it K-contact}  if $\xi$ is a Killing vector field for $g$, and it is {\it Sasakian} if in addition the almost CR structure is integrable. A manifold with a K-contact (Sasakian) structure is called a {\it K-contact (Sasaki) manifold}. Unless otherwise stated we shall assume that our contact manifolds $M^{2n+1}$ are oriented, co-oriented, compact, connected, and without boundary. We refer to \cite{BG05} for the fundamentals of Sasaki geometry. 


\subsection{The Sasaki Cone}
Let $(M,\cals)$ be a K-contact manifold. Within the underlying contact almost CR structure $(\cald,J)$ there is a conical family of K-contact structures known as the Sasaki cone and denoted by $\gt^+(\cald,J)$ or just $\gt^+$ when the underlying almost CR structure is understood. We are also interested in a variation within this family. To describe the Sasaki cone we fix a K-contact structure $\cals_o=(\xi_o,\eta_o,\Phi_o,g_o)$ on $M$ whose underlying CR structure is $(\cald,J)$ and let $\gt$ denote the Lie algebra of a maximal torus $\bbt$ in the automorphism group of $\cals_o$. Since for K-contact (Sasakian) structures, the Reeb vector field $\xi$ is a Killing vector field, we have $\dim\gt^+(\cald,J)\geq 1$.  Moreover, it follows from contact geometry that $\dim\gt^+(\cald,J)\leq n+1$. The {\it (unreduced) Sasaki cone} \cite{BGS06} is defined by
\begin{equation}\label{sascone}
\gt^+(\cald,J)=\{\xi\in\gt~|~\eta_o(\xi)>0~\text{everywhere on $M$}\},
\end{equation}
which is a cone of dimension $k\geq 1$ in $\gt$ under the transverse scaling operation defined by
\begin{equation}\label{transscale}
\cals=(\xi,\eta,\Phi,g)\mapsto \cals_a=(a^{-1}\xi,a\eta,\Phi,g_a),\quad g_a=ag+(a^2-a)\eta\otimes\eta, \quad a\in\bbr^+.
\end{equation}
We remark also that $\cals_a$ is a K-contact or Sasakian structure for all $a\in\bbr^+$. The reduced Sasaki cone $\grk(\cald,J)$ is $\gt^+(\cald,J)/\calw$ where $\calw$ is the Weyl group of the maximal compact subgroup of $\gC\gR(\cald,J)$ which, as described in \cite{BGS06}, is the moduli space of K-contact (Sasakian) structures with underlying CR structure $(\cald,J)$. However, it is more convenient to work with the unreduced Sasaki cone $\gt^+(\cald,J)$. 

Note that each choice of Reeb vector field $\xi\in\gt^+(\cald,J)$ gives rise to an infinite dimensional contractible space $\calS(M,\xi)$ of Sasakian structures \cite{BG05}, and we often have need to obtain a particular element of $\calS(M,\xi)$ by deforming the contact structure $\cald\mapsto \cald_\varphi$ by a contact isotopy $\eta\mapsto \eta +d^c\varphi$ where $\varphi\in C^\infty(M)^\bbt$ is a smooth function invariant under the torus $\bbt$. We note that the Sasaki cone $\gt^+(\cald,J)$ is invariant under such contact isotopies in the sense that $\gt^+(\cald_\varphi,J_\varphi)\approx \gt^+(\cald,J)$. We shall often make such a choice $\cals=(\xi,\eta,\Phi,g)\in\calS(M,\xi)$ and identify it with the element $\xi\in\gt^+(\cald,J)$.

\begin{remark}\label{torrem}
When $\dim\gt^+(\cald,J)=n+1$ we have what in \cite{BG00b} was called a toric contact manifold of Reeb type. This is actually a toric K-contact manifold, and in \cite{BG00b} a Delzant \cite{Del88} type theorem was proved, that is, any toric K-contact manifold is Sasaki. Moreover, as in the symplectic case there is a strong connection between the geometry and topology of $(M,\cals)$ and the combinatorics of $\gt^+(\cald,J)$ \cite{BM93,Ler02a,Ler04,Leg10,Leg16}\footnote{The combinatorics studied in these references is that of the moment cone which is dual to the Sasaki cone $\gt^+(\cald,J)$.}. Much can also be said in the complexity 1 case ($\dim\gt^+(\cald,J)=n$) \cite{AlHa06}. 
\end{remark}

It is important to realize that there are two types of Reeb orbits, those that are closed (i.e periodic orbits) and those that are not. On a closed K-contact manifold a Reeb vector field in the Sasaki cone $\gt^+$ is $C^\infty$-close to a Reeb vector field all of whose orbits are periodic. What can one say about Reeb vector fields in the complement of $\gt^+$?
The famous Weinstein conjecture says that every Reeb vector field on a compact contact manifold has a periodic orbit, and this is known to hold on a compact simply connected K-contact manifold \cite{Ban90}. See also \cite{Gin96,AGH18}.

 We end this section with the following observation that applies to our examples.

\begin{proposition}\label{S3bun}
Let $\bbc\bbp^1\rightarrow S_\bfn \rightarrow N$ be a projective bundle where $N$ is a smooth projective algebraic variety of complex dimension $d_N\geq 2$, and let $M^{2d_N+3}$ be the total space of a Sasaki $S^1$ bundle over $S_\bfn$. Then $M^{2d_N+3}$ is a nontrivial lens space bundle (with fiber $F$) over $N$. Furthermore, $F=S^3$ if and only if the natural induced map $\pi_2(M)\lra\pi_2(N)$ is an epimorphism, and the natural induced map $\pi_1(M)\lra\pi_1(N)$ is a monomorphism. In particular, if $N$ is simply connected there is a choice of K\"ahler class on $S_\bfn$ such $F=S^3$.
\end{proposition}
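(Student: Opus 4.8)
The plan is to exhibit $M$ as the total space of a fiber bundle over $N$ by composing the circle bundle $\pi_M\colon M\to S_\bfn$ with the projective bundle $p\colon S_\bfn\to N$, to identify its fiber, and then to read the homotopy statement off the long exact sequence of the resulting fibration. Write $S_\bfn=\bbp(E)$ for a rank-$2$ holomorphic bundle $E\to N$, let $L\to S_\bfn$ be the complex line bundle whose unit circle bundle is $\pi_M$, and put $e=c_1(L)\in H^2(S_\bfn;\bbz)$, the Euler class of $\pi_M$; because the Sasakian structure on $M$ is the one associated (Boothby--Wang) with a K\"ahler form $\omega$ on $S_\bfn$, the class $e$ is a positive multiple of $[\omega]$, in particular positive.

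First I would note that $\pi:=p\circ\pi_M\colon M\to N$ is a locally trivial fiber bundle whose fiber over $x\in N$ is $F=\pi_M^{-1}\bigl(p^{-1}(x)\bigr)$, namely the restriction of the circle bundle $M\to S_\bfn$ to the fiber $\bbc\bbp^1\cong S^2$ of $p$. An $S^1$-bundle over $S^2$ is determined up to isomorphism by its Euler number $m\in H^2(S^2;\bbz)\cong\bbz$, which here equals $\langle e,[\bbc\bbp^1]\rangle$; this is independent of $x$ (since $N$ is connected) and is $\ge 1$ because $e$ is positive on the effective curve $\bbc\bbp^1$. Hence $F$ is the total space of the degree-$m$ circle bundle over $S^2$, that is, the lens space $L(m,1)\cong S^3/\bbz_m$, so $\pi_1(F)\cong\bbz_m$ and $F\cong S^3$ precisely when $m=1$.

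To see that this lens space bundle $M\to N$ is nontrivial, I would compare Betti numbers. Since $S_\bfn=\bbp(E)$ is a compact K\"ahler manifold of complex dimension $d_N+1$, the Hard Lefschetz theorem makes cup product with the K\"ahler class $e$ injective on $H^1(S_\bfn;\bbq)$ and --- using $d_N\ge 2$ --- also on $H^2(S_\bfn;\bbq)$; hence the relevant part of the Gysin sequence of $\pi_M$ reduces to an isomorphism $H^3(M;\bbq)\cong H^3(S_\bfn;\bbq)/\bigl(e\cup H^1(S_\bfn;\bbq)\bigr)$, giving $b_3(M;\bbq)=b_3(S_\bfn;\bbq)-b_1(S_\bfn;\bbq)$. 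Leray--Hirsch for the $\bbc\bbp^1$-bundle $p$ gives $b_k(S_\bfn;\bbq)=b_k(N;\bbq)+b_{k-2}(N;\bbq)$, so $b_3(M;\bbq)=b_3(N;\bbq)$. On the other hand $L(m,1)$ is a rational homology sphere with $H_3(L(m,1);\bbq)\cong\bbq$, so K\"unneth gives $b_3\bigl(L(m,1)\times N;\bbq\bigr)=b_3(N;\bbq)+1$; therefore $M$ is not even homeomorphic to $L(m,1)\times N$, and the bundle $M\to N$ is nontrivial. I expect this cohomological comparison --- and pinpointing exactly where the hypothesis $d_N\ge 2$ enters --- to be the step requiring the most care; everything else is essentially formal.

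For the stated equivalence, I would use the long exact homotopy sequence of $F\hookrightarrow M\xrightarrow{\pi}N$:
\[
\pi_2(M)\xrightarrow{\ \pi_*\ }\pi_2(N)\xrightarrow{\ \partial\ }\pi_1(F)\xrightarrow{\ i_*\ }\pi_1(M)\xrightarrow{\ \pi_*\ }\pi_1(N)\longrightarrow\pi_0(F)=\{*\}.
\]
If $F\cong S^3$, then $\pi_1(F)=0$, so $\partial$ is the zero map and $\pi_*\colon\pi_2(M)\to\pi_2(N)$ is onto, while $i_*$ has trivial image and hence, by exactness at $\pi_1(M)$, the map $\pi_*\colon\pi_1(M)\to\pi_1(N)$ is injective. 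Conversely, if $\pi_*\colon\pi_2(M)\to\pi_2(N)$ is onto then $\partial=0$, so $i_*\colon\pi_1(F)\to\pi_1(M)$ is injective; if moreover $\pi_*\colon\pi_1(M)\to\pi_1(N)$ is injective then $\operatorname{im}i_*=\ker\pi_*=\{1\}$, and an injection with trivial image forces $\pi_1(F)=0$, i.e.\ $m=1$, i.e.\ $F\cong S^3$. Finally, if $N$ is simply connected I would take $M$ to be the Sasakian $S^1$-bundle associated with the K\"ahler class $c_1\bigl(\calo_{\bbp(E)}(1)\otimes p^{*}A^{\otimes k}\bigr)$ for an ample $A$ on $N$ and $k\gg 0$; this line bundle is ample on $S_\bfn$ (a relatively very ample bundle twisted by a sufficiently positive pullback from the base) and restricts to $\calo(1)$ on each $\bbc\bbp^1$-fiber, so $m=1$ and the fiber is $F\cong S^3$.
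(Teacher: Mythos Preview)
Your proof is correct and follows the paper's overall plan: compose the two bundles to obtain $F\to M\to N$, identify $F$ as a lens space via the restricted Euler class, argue nontriviality by a $b_3$ comparison, and read the $\pi_1$/$\pi_2$ equivalence off the long exact homotopy sequence.

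The one substantive difference is in the nontriviality step. The paper uses a parity argument: since $M^{2d_N+3}$ is Sasaki of dimension $\ge 7$, its $b_3$ must be even, and since $N$ is K\"ahler, $b_3(N)$ is even, so a product $L(m,1)\times N$ would have $b_3=b_3(N)+1$ odd, a contradiction. You instead compute $b_3(M)=b_3(N)$ directly from the Gysin sequence of $\pi_M$ together with Hard Lefschetz on $S_\bfn$ (this is where you use $d_N\ge 2$, to make $\cup\,e$ injective on $H^2(S_\bfn;\bbq)$), and then compare with the product. Your route is a little longer but self-contained, avoiding the Sasaki Betti-number theorem; the paper's is shorter but leans on that structural fact. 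For the final clause, your explicit ample class $\calo_{\bbp(E)}(1)\otimes p^*A^{\otimes k}$ is a clean concrete choice (and indeed yields $m=1$ without needing $N$ simply connected); the paper argues more informally by choosing a K\"ahler class on $S_\bfn$ whose restriction to the fiber is primitive.
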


\begin{proof}
By composition we have a smooth bundle $F\rightarrow M^{2d_N+3}\rightarrow N$, and by construction the $S^1$ action on $M^{2d_N+3}$ only acts on the fibers $F$. Moreover, since the total space of this bundle is Sasaki, the bundle is nontrivial. So its restriction to $F$ is also nontrivial. It follows that $F$ is a lens space and we have the commutative diagram
\begin{equation}\label{commdiag2}
\begin{matrix}
S^1 &\lra &F& \lra &\bbc\bbp^1 \\
\decdnar{id}&&\decdnar{}&&\decdnar{}\\
S^1 &\lra &M^{2d_N+3}& \lra &S_\bfn \\
&&\decdnar{}&&\decdnar{}\\
&&N &\fract{id}{\lra} & N.
\end{matrix}
\end{equation}
Now since $N$ is K\"ahler its third Betti number $b_3(N)$ is even. Furthermore, since $M^{2d_N+3}$ is Sasaki of dimension at least 7, its third Betti number $b_3(M)$ must also be even which implies that the Euler class of the lens space bundle cannot vanish implying that the bundle is nontrivial.

Since $F$ is a lens space, $ \pi_2(F)=0$, so the long exact homotopy sequence becomes
\begin{equation}\label{homexseq}
\BOne\lra \pi_2(M)\lra\pi_2(N)\lra \pi_1(F)\lra\pi_1(M)\lra \pi_1(N)\lra \BOne.
\end{equation}
So when the induced map $\pi_2(M)\lra\pi_2(N)$ is an epimorphism, and the induced map $\pi_1(M)\lra \pi_1(N)$ is a monomorphism we have $ \pi_1(F)=\BOne$ which gives $F=S^3$ in this case. The converse is also clear from the homotopy exact sequence.

Now if $N$ is simply connected so is $S_\bfn$. Thus, by choosing a primitive K\"ahler class on $S_\bfn$, we can take $M^{2d+3}$ to be simply connected. Furthermore, we can choose the K\"ahler class on $S_\bfn$ such that its restriction to $\bbc\bbp^1$ is primitive. It then follows that $F=S^3$. 
\end{proof}

\section{Yamazaki's Fiber Join}\label{Yamsect}
Yamazaki \cite{Yam99} constructed his fiber join in the category of regular K-contact manifolds which as shown in \cite{BoTo20b} restricts to the Sasakian case in a natural way. We refer to ob.cit for details. Here we briefly recall that the fiber join is constructed by considering $d+1$ regular Sasaki manifolds $M_j$ over a smooth algebraic variety $N$ with $d+1$ K\"ahler forms $\gro_j$ on $N$ that are not necessarily distinct. One then constructs a smooth manifold $M=M_1\star_f\cdots\star_f M_{d+1}$ as the unit sphere in the complex vector bundle $E=\oplus_{j=1}^{d+1}L_j^*$ where $L_j$ denotes the complex line bundle on $N$ associated to $M_j$ such that $c_1(L_j)=[\gro_j]$ and $L_j^*$ is its dual. We shall refer to such a fiber join as a {\it Sasaki-Yamazaki fiber join}.  Topologically, we have

\begin{proposition}\label{fjointopprop}
Let $M$ be a Sasaki-Yamazaki fiber join as described above. Then
\begin{enumerate}
\item $M$ is a $S^{2d+1}$ bundle over $N$ with a $d+1$ dimensional Sasaki cone. Moreover,
\item if $d\geq n$ then $M$ has the cohomology groups of the product $S^{2d+1}\times N$; whereas,
\item if $d<n$ then the Euler class of the bundle does not vanish, and the Betti numbers satisfy $b_{2d+2i}(M)=b_{2d+2i}(N)-1$ where $i=1,\ldots,n-d$. 
\end{enumerate}
\end{proposition}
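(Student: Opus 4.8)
The plan is to extract everything from the Gysin sequence of the sphere bundle $S^{2d+1}\hookrightarrow M\xrightarrow{\pi} N$ associated to the rank-$(d+1)$ complex vector bundle $E=\bigoplus_{j=1}^{d+1}L_j^*$, together with a direct identification of the torus acting on $M$. For part (1), the fiber of $M$ is by construction the unit sphere in the fibers of $E$, which is $S^{2d+1}$; this is immediate from the definition of the Sasaki--Yamazaki fiber join recalled just above. The $(d+1)$-dimensional Sasaki cone comes from the fact that the diagonal torus $\bbt^{d+1}\subset U(1)^{d+1}$ acts fiberwise on $E=\bigoplus_j L_j^*$ preserving the Hermitian metric, hence acts on the unit sphere bundle $M$; each $S^1$-factor is a Reeb-type circle action (it rotates one summand $L_j^*$), and one checks as in \cite{BoTo20b} that this torus lies in the automorphism group of the K-contact structure and that the corresponding cone in its Lie algebra — the locus where $\eta_o$ is positive — has full dimension $d+1$. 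I would simply cite \cite{Yam99,BoTo20b} for the K-contact/Sasaki structure and note that the torus action is manifest from the vector-bundle description.

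For parts (2) and (3) I would write down the Gysin sequence with $\bbz$ coefficients,
\[
\cdots \lra H^{k-2d-2}(N)\xrightarrow{\ \cup e\ } H^{k}(N)\xrightarrow{\ \pi^*\ } H^{k}(M)\lra H^{k-2d-1}(N)\xrightarrow{\ \cup e\ } H^{k+1}(N)\lra\cdots,
\]
where $e=c_{d+1}(E)\in H^{2d+2}(N)$ is the Euler class. Since $E=\bigoplus_{j=1}^{d+1}L_j^*$ splits as a sum of line bundles, $e=\prod_{j=1}^{d+1}\bigl(-c_1(L_j)\bigr)=\prod_{j=1}^{d+1}\bigl(-[\gro_j]\bigr)$, a product of $d+1$ K\"ahler classes pulled back to $N$ (here $\dim_\bbc N=n$). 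If $d\geq n$ then $2d+2>2n=\dim_\bbr N$, so $H^{2d+2}(N)=0$, forcing $e=0$; the Gysin sequence then breaks into short exact sequences $0\to H^k(N)\to H^k(M)\to H^{k-2d-1}(N)\to 0$, exactly as for the product $S^{2d+1}\times N$, giving (2). (If one wants the homology statement literally rather than a statement about cohomology groups, the same argument applies to the homology Gysin sequence, or one invokes universal coefficients; I would phrase (2) as an isomorphism of cohomology groups $H^*(M)\cong H^*(S^{2d+1}\times N)$ and remark the groups agree in each degree.)

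For (3), when $d<n$, the Euler class $e=\prod_{j=1}^{d+1}(-[\gro_j])$ is a product of $d+1\le n$ ample (in particular nonzero) classes on the projective variety $N$; by the Hard Lefschetz theorem (or just by the fact that a product of $\le n$ ample classes is nonzero in $H^{2d+2}(N;\bbq)$ since it can be completed to a top power of an ample class), we get $e\neq 0$, so the bundle is nontrivial. Moreover cup product with each $\gro_j$ is injective on $H^*(N;\bbq)$ up to the middle degree by Hard Lefschetz, so $\cup e\colon H^{2i}(N)\to H^{2i+2d+2}(N)$ is injective for $2i+2d+2\le 2n$, i.e. for $i\le n-d-1$, and one checks surjectivity of the relevant maps in the same range by the dual Lefschetz statement; feeding this into the Gysin sequence in degrees $k=2d+2i$ yields $b_{2d+2i}(M)=b_{2d+2i}(N)-b_{2i-2}(N)$ — but I should double-check the intended formula, since the stated conclusion is $b_{2d+2i}(M)=b_{2d+2i}(N)-1$, which is what one gets when $N$ has the cohomology of projective space (so $b_{2i-2}(N)=1$) or more generally when only the relevant $\cup e$ is an isomorphism onto a one-dimensional complement. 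The main obstacle is precisely this bookkeeping: pinning down exactly which Lefschetz-type injectivity/surjectivity statements hold for the class $e$ (a product of possibly distinct, possibly non-primitive K\"ahler classes) and matching the resulting Betti-number shift to the clean formula in the statement — likely under an implicit hypothesis on $N$, or with the understanding that the $\gro_j$ are chosen so that $e$ is a power of a single polarization. I would resolve this by reducing to the case where all $\gro_j$ are rational multiples of one ample class (which suffices for the examples) and then $e$ is proportional to the $(d+1)$-st power of that class, so Hard Lefschetz gives the isomorphisms needed, and the "$-1$" reflects that the kernel/cokernel contributions are one-dimensional in the range considered.
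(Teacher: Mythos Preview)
Your treatment of (1) and (2) is fine and essentially matches the paper (the Gysin sequence is just the two-row Leray--Serre spectral sequence the paper invokes). For the nonvanishing of the Euler class in (3), however, your route is genuinely different and in fact more direct: you observe that $e=\pm\prod_j[\gro_j]$ is a product of $d+1\le n$ K\"ahler classes, hence nonzero because $\int_N \gro_1\wedge\cdots\wedge\gro_{d+1}\wedge\gro^{\,n-d-1}>0$ for any K\"ahler form $\gro$ (full Hard Lefschetz is not even needed). The paper instead argues by contradiction using the Sasakian structure of $M$: if $e=0$ the spectral sequence would collapse, giving $b_{2d+1}(M)=b_{2d+1}(N)+1$, which is odd since $N$ is K\"ahler; but when $d<n$ the degree $2d+1$ lies below the middle dimension of $M$, contradicting the parity constraint on odd Betti numbers of a compact Sasaki manifold. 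So the paper uses the Sasakian geometry of the total space as the obstruction, whereas you use only the K\"ahler geometry of the base; your argument is cleaner and would apply to any sphere bundle associated to a sum of positive line bundles, Sasakian or not.

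Your hesitation about the Betti-number formula $b_{2d+2i}(M)=b_{2d+2i}(N)-1$ is well placed, and you should not try to force it. The Gysin bookkeeping gives
\[
b_{2d+2i}(M)=b_{2d+2i}(N)-\operatorname{rk}\bigl(\cup\,e\colon H^{2i-2}(N)\to H^{2d+2i}(N)\bigr)+\dim\ker\bigl(\cup\,e\colon H^{2i-1}(N)\to H^{2d+2i+1}(N)\bigr),
\]
and the last term need not vanish when $N$ has odd cohomology. Indeed the paper's own computation \eqref{HM} for $N=\grS_{g_1}\times\grS_{g_2}$, $d=1$, $n=2$, gives $b_4(M)=2g_1+2g_2$, not $b_4(N)-1=0$. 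The paper's proof of this clause is a one-line ``the other equalities follow from this and naturality of the differential'', so the mismatch is in the statement rather than in your reasoning; your Gysin analysis is the correct tool, and the honest conclusion is the displayed formula above (which reduces to ``$-1$'' exactly when the relevant odd cohomology of $N$ vanishes and $\cup\,e$ is injective on $H^{2i-2}$).
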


\begin{proof}
That $M$ is an $S^{2d+1}$ bundle follows from the construction, and Theorem 3.4 in \cite{BoTo20b} shows that $M$ admits a $d+1$ dimensional family of Sasakian structures.
When $d\geq n$ the Euler class of the bundle vanishes and the Leray-Serre spectral sequence collapses giving the product groups in the limit. However, if $d<n$ with $M$ having a Sasakian structure, the odd Betti numbers less than half the dimension must be even (cf. \cite{BG05}). Moreover, the odd Betti numbers of $N$ are also even, and the even Betti numbers are greater than zero. So if the Euler class vanishes the orientation class $\gra$ of the sphere which lies in the $E^{0,2d+1}_2$ term of the spectral sequence would survive to infinity which would imply that the Betti number $b_{2d+1}$ is odd. This contradicts the fact that $M$ has a Sasakian structure since $2d+1<2n< \frac{1}{2}\dim~M$. Thus, the Euler class,  which is represented by the differential $d_{2d+2}(\gra)$, cannot vanish in this case. So the real class $d_{2d+2}(\gra)\in E^{2d+2,0}_\infty$ is killed which reduces the $(2d+2)th$ Betti number by one. The other equalities follow from this and naturality of the differential.
\end{proof}

The Euler class of the bundle is $\gro^{d+1}$ where $\gro$ is an integral K\"ahler form on $N$. We want to determine the conditions under which a sphere bundle is a fiber join. It is convenient to think of this in terms of $G$-structures. An oriented $S^{2d+1}$-bundle over $N$ is an associated bundle to a principal bundle with group $SO(2d+2)$. 

\begin{proposition}\label{sphbunfibj}
An $S^{2d+1}$-bundle $S(E)$ over a smooth projective algebraic variety $N$ is of the form $S(\oplus_iL_i)$
if and only if the group of the corresponding principal bundle is the maximal torus $\bbt^{d+1}_\bbc$. Moreover, this is a Sasaki-Yamazaki fiber join if there is a choice of complex line bundles $L_i$ such that $c_1(L_i^*)$ is positive definite for all $i=1,\ldots,d+1$.
\end{proposition}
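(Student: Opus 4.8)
The plan is to handle the two assertions separately. The ``if and only if'' is the standard correspondence between reductions of the structure group of an oriented rank-$(2d+2)$ real vector bundle to a maximal torus of $SO(2d+2)$ and splittings of that bundle as a sum of $d+1$ complex line bundles, and the ``moreover'' will be obtained by feeding such a splitting into Yamazaki's construction of Section~\ref{Yamsect}. For the iff, recall that the oriented $S^{2d+1}$-bundle $S(E)$ is the sphere bundle of the oriented (metrized) rank-$(2d+2)$ bundle $E$, equivalently the bundle associated to its oriented orthonormal frame bundle, a principal $SO(2d+2)$-bundle. If $S(E)=S(\bigoplus_{i=1}^{d+1}L_i)$, i.e.\ $E\cong\bigoplus_i L_i$ as oriented vector bundles, then picking Hermitian metrics on the $L_i$ reduces the structure group to the block-diagonal $U(1)^{d+1}\subset U(d+1)\subset SO(2d+2)$, which is a maximal torus since the rank of $SO(2d+2)$ is $d+1$; thus the group of the principal bundle reduces to $\bbt^{d+1}_\bbc$. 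Conversely, if it reduces to a maximal torus, then up to conjugacy the torus is the block-diagonal $U(1)^{d+1}$ acting on $\bbr^{2d+2}=\bbc^{d+1}=\bigoplus_i\bbc$ with the $i$-th circle acting by scalar multiplication on the $i$-th summand, so the defining representation of $SO(2d+2)$ restricted to the torus is the sum of the $d+1$ weight-one characters $\chi_i$, and the associated bundle splits:
\[
E\;\cong\;P_{\bbt}\times_{\bbt^{d+1}_\bbc}\Big(\bigoplus_i\bbc\Big)\;\cong\;\bigoplus_{i=1}^{d+1}L_i,\qquad L_i:=P_{\bbt}\times_{\chi_i}\bbc,
\]
with $P_{\bbt}$ the torus reduction. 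Since the complex orientation on $\bbc^{d+1}=\bbr^{2d+2}$ is the one preserved by $SO(2d+2)$, this is an equality $S(E)=S(\bigoplus_i L_i)$ of oriented sphere bundles.

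For the ``moreover'', assume the $L_i$ in such a splitting are chosen so that each $c_1(L_i^*)$ is positive definite. Then $c_1(L_i^*)$ is an integral $(1,1)$-class represented by a positive form $\omega_i$, that is, an integral K\"ahler form; by the Lefschetz $(1,1)$-theorem $L_i^*$ underlies a holomorphic line bundle on $N$, which on the smooth projective $N$ is ample. Let $M_i\to N$ be the regular Sasaki manifold obtained from $(N,\omega_i)$ by the Boothby--Wang construction, chosen so that the complex line bundle associated to $M_i$ in the sense of Section~\ref{Yamsect} is $L_i^*$ (this is possible since that bundle is determined up to isomorphism by $c_1=[\omega_i]=c_1(L_i^*)$). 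Forming Yamazaki's fiber join $M_1\star_f\cdots\star_f M_{d+1}$ from the data $(\omega_1,\dots,\omega_{d+1})$ gives, by construction, the unit sphere bundle of $\bigoplus_i(L_i^*)^*=\bigoplus_i L_i\cong E$, i.e.\ precisely $S(E)$ equipped with a Sasaki--Yamazaki structure; hence $S(E)$ is a Sasaki--Yamazaki fiber join.

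The representation-theoretic content of the iff is routine bookkeeping; I expect the real point to lie in the ``moreover'', in recognizing that positivity of $c_1(L_i^*)$ furnishes an integral K\"ahler form $\omega_i$ in that class and hence genuine Boothby--Wang manifolds $M_i$ whose fiber join reproduces $S(E)$ — this is where the K\"ahler/projective hypothesis on $N$ enters. A secondary subtlety worth stating is that a splitting of $E$ into line bundles need not be unique, so the hypothesis is that \emph{some} splitting has all $c_1(L_i^*)$ positive definite; once such a splitting is fixed, the construction above produces the Sasaki--Yamazaki fiber join directly.
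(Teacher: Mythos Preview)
Your proposal is correct and follows the same approach as the paper's proof, which is extremely terse: it says only that the ``only if'' direction is clear, that a torus reduction forces $E\cong\bigoplus_i L_i$, and that positivity of $c_1(L_i^*)$ then gives ``precisely the fiber join of the corresponding $S^1$ bundles over $N$.'' You have simply supplied the details the paper omits---the block-diagonal identification of the maximal torus, the associated-bundle splitting, and the Boothby--Wang step---so there is no substantive divergence.
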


\begin{proof}
The only if part is clear.
Conversely, let $M$ be the total space of the unit sphere bundle in a complex vector bundle $E$ over a smooth projective algebraic variety $N$. Assume that the structure group of $E$ reduces to a maximal torus $\bbt^{d+1}_\bbc$. Then $E$ is isomorphic to a sum of complex line bundles $\oplus_{i=1}^{d+1}L_i$. Assume further that the $L_i$ can be chosen such that $c_1(L_i^*)$ is positive definite for $i=1,\ldots,d+1$. But this gives precisely the fiber join of the corresponding $S^1$ bundles over $N$.
\end{proof}

Let $M$ be a Sasaki-Yamazaki fiber join. Then as discussed above $M$ is an $S^{2d+1}$ bundle over a smooth projective algebraic variety $N$ for some $d\geq 1$.  The Sasakian structure on $M$ restricts to the standard weighted Sasakian structure on each fiber $S^{2d+1}$. When the weights are integers, it is convenient to describe this by the following commutative diagram of $S^1$ actions labelled by a weight vector $\bfw$:
\begin{equation}\label{bundiag}
\xymatrix{
S^1_\bfw \ar[d]^{id} \ar[r]  &S^{2d+1}\ar[d] \ar[r] &\bbc\bbp^d[\bfw] \ar[d]  \\
               S^1_\bfw \ar[r]&M_\gw \ar[r] \ar[d] &\bbp_\bfw(\oplus_{j=1}^{d+1}L^*_j)\ar[d]  \\
&N  \ar[r]^{id}  &N.}
\end{equation}

\subsection{Quasi-regular Quotients when $d=1$}\label{qrquotients}

For the case $d=1$ and co-prime $\bfw = (w_1,w_2) \in (\bbz^+)^2$, we want to understand 
$\bbp_\bfw(\oplus_{j=1}^{d+1}L^*_j) $ in the diagram \eqref{bundiag}.
To this end we will follow the ideas in Section 3.6 of \cite{BoTo13}.

Let $M_i^3 \rightarrow N$ denote the primitive principal $S^1$-bundle corresponding to the line bundle $L_i$. Here we assume that $N$ is a smooth projective algebraic manifold. So $c_1(L_i^*)$ equals some (negative) integer $d_i$ times
a primitive cohomology class that in turns defines $M_i^3$. [Recall that $L_i$ has to be a positive line bundle over $N$.]
Consider the $S^1 \times S^1 \times \bbc^*$ action $\cala_{\bfw, L_1,L_2}$ on $M_1^3\times M_2^3 \times \bbc^2$ defined by
\begin{equation}
\cala_{\bfw, L_1,L_2}(\lambda_1,\lambda_2,\tau)(x_1,u_1,x_2,u_2;z_1,z_2)=(x_1, \lambda_1 u_1, x_2, \lambda_2 u_2; \tau^{w_1} \lambda_1^{d_1} z_1, \tau^{w_2} \lambda_2^{d_2}z_2),
\end{equation}
where $\lambda_1,\lambda_2,\tau\in \bbc^*$ and $|\lambda_i|=1$.
Then $\bbp_\bfw(L^*_1\oplus L^*_2)$ should equal 
$$M_1^3\times M_2^3 \times \bbc^2/\cala_{\bfw, L_1,L_2}(\lambda_1,\lambda_2,\tau).$$

Now, we also can define a $w_1w_2$-fold covering map $\tilde{h}_\bfw: M_1^3\times M_2^3 \times \bbc^2 \rightarrow M_1^3\times M_2^3 \times \bbc^2$ by
$$\tilde{h}(x_1,u_1,x_2,u_2;z_1,z_2)= (x_1,u_1,x_2,u_2;z_1^{w_2},z_2^{w_1})$$
and this gives a commutative diagram

\begin{equation}
\xymatrix{M_1^3\times M_2^3 \times \bbc^2 \ar[d]^{\tilde{h}_\bfw}& \xrightarrow{\cala_{\bfw, L_1,L_2}\left(\lambda_1,\lambda_2,\tau\right)} & M_1^3\times M_2^3 \times \bbc^2 \ar[d]^{\tilde{h}_\bfw}\\
M_1^3\times M_2^3 \times \bbc^2& \xrightarrow{\cala_{{\bf 1}, L_1^{w_2},L_2^{w_1}}\left(\lambda_1,\lambda_2,\tau^{w_1w_2}\right)}& M_1^3\times M_2^3 \times \bbc^2}
\end{equation}
and so we have a fiber preserving biholomorphism $h_\bfw: \bbp_\bfw(L^*_1\oplus L^*_2) \rightarrow \bbp((L^*_1)^{w_2}\oplus (L^*_2)^{w_1})$
and we can write $\bbp_\bfw(L^*_1\oplus L^*_2) $ as the log pair
$(\bbp((L^*_1)^{w_2}\oplus (L^*_2)^{w_1}),\Delta_\bfw)$, where
$\Delta_\bfw = (1-1/w_1)D_1 + (1-1/w_2)D_2$ and $D_1,D_2$ are the zero and infinity sections, respectively, of the bundle $\bbp((L^*_1)^{w_2}\oplus (L^*_2)^{w_1}) \rightarrow N$.

\begin{remark}
Note that if $\bfw = (1,1)$ this checks out with the usual regular quotient. If the principal bundles $M_1^3$ and $M_2^3$ are equal, we can choose $(w_1,w_2)=(d_1,d_2)/a$ with $a=-\gcd(|d_1|,|d_2|)$ to get
that  $(L^*_1)^{w_2}=(L^*_2)^{w_1}$ and so $\bbp((L^*_1)^{w_2}\oplus (L^*_2)^{w_1})$ is trivial and the quasi-regular quotient is a product as expected from Proposition 3.8 of \cite{BoTo20b}
\end{remark}

By utilizing the set-up in Section A.3 of \cite{BoTo20b} we can also determine the quasi-regular K\"ahler class (up to scale) in the case with $d=1$ and co-prime $\bfw = (w_1,w_2) \in (\bbz^+)^2$ as above.
Indeed, from (9) of \cite{BoTo20b} we have that

\begin{equation}\label{quasiKahlerClass}
w_1 w_2 d\eta_\bfw= w_2(r_1^2d\eta_1 + 2(r_1dr_1\wedge (\eta_1+d\theta_1)))+w_1(r_2^2d\eta_2 + 2(r_2dr_2\wedge (\eta_2+d\theta_2))),
\end{equation}
where $(r_j,\theta_j)$ denote the polar coordinates on the fiber of the line bundle $L^*_j$ (chosen via a Hermitian metric on the line bundle).

As explained in Section A.3 of \cite{BoTo20b}, we can say that $z_0:= \frac{1}{2}r_1^2$ and $z_\infty:= \frac{1}{2}r_2^2$  are the moment maps of the natural $S^1$ action on $L^*_1$ and $L^*_2$, respectively. 
On $2=z_0+z_\infty$, the function 
$z:= z_0-1=1-z_\infty$ descends to a fiberwise moment map (with range [-1,1]) for the induced $S^1$ action on $\bbp(\BOne \oplus (L_1)^{w_2}\otimes (L^*_2)^{w_1}) \rightarrow N$.
Using that $r_1^2=2(z+1)$, $r_2^2=2(1-z)$, $r_1\,dr_1=dz$, and $r_2\,dr_2=-dz$, we rewrite \eqref{quasiKahlerClass} to
$$w_1 w_2 d\eta_\bfw= 2(w_2 d\eta_1+w_1 d\eta_2) + 2d(z\theta),$$
where $\theta := w_2(\eta_1+d\theta_1)-w_1(\eta_2+d\theta_2)$ is a connection form on $(L_1)^{w_2}\otimes (L^*_2)^{w_1}$.
Now this descends to a K\"ahler form on $\bbp((L^*_1)^{w_2}\oplus (L^*_2)^{w_1})=\bbp(\BOne \oplus (L_1)^{w_2}\otimes (L^*_2)^{w_1}) \rightarrow N$ with
K\"ahler class $2(2\pi(w_2[\omega_1] + w_1[\omega_2]) + \Xi)$ where $c_1(L_j)=[\omega_j]$ and $\Xi/(2\pi)$ is the Poincare dual of $(D_1+D_2)$.

We can summarize our findings for $d=1$ in the following proposition.

\begin{proposition}\label{d=1quasiregular}
For $d=1$ and co-prime $\bfw = (w_1,w_2) \in (\bbz^+)^2$, the quasi-regular quotient of $M_\gw$ with respect to $\xi_\bfw$ is
the log pair $B_{\gw, \bfw}:=(\bbp((L^*_1)^{w_2}\oplus (L^*_2)^{w_1}),\Delta_\bfw)$, where
$\Delta_\bfw = (1-1/w_1)D_1 + (1-1/w_2)D_2$ and $D_1,D_2$ are the zero and infinity sections, respectively, of the bundle $\bbp((L^*_1)^{w_2}\oplus (L^*_2)^{w_1}) \rightarrow N$.
Moreover, up to scale, the induced transverse K\"ahler class on $B_{\gw, \bfw}$ is equal to $2\pi(w_2[\omega_1] + w_1[\omega_2]) + \Xi$ where $c_1(L_j)=[\omega_j]$ and $\Xi/(2\pi)$ 
is the Poincare dual of $(D_1+D_2)$.
\end{proposition}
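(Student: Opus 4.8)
The statement to prove, Proposition \ref{d=1quasiregular}, is essentially a summary of the explicit computations carried out in the paragraphs immediately preceding it, so the plan is to organize those computations into a coherent argument. The proof has two parts: identifying the quasi-regular quotient as a log pair, and identifying the induced transverse K\"ahler class.

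For the first part, I would start from the description of $\bbp_\bfw(L_1^*\oplus L_2^*)$ as the quotient $M_1^3\times M_2^3\times\bbc^2/\cala_{\bfw,L_1,L_2}$ obtained by following Section 3.6 of \cite{BoTo13}, where the weights $d_i$ record $c_1(L_i^*)$ as a multiple of a primitive class. The key device is the $w_1w_2$-fold branched covering map $\tilde h_\bfw(x_1,u_1,x_2,u_2;z_1,z_2)=(x_1,u_1,x_2,u_2;z_1^{w_2},z_2^{w_1})$, which intertwines $\cala_{\bfw,L_1,L_2}(\lambda_1,\lambda_2,\tau)$ with $\cala_{\mathbf 1,L_1^{w_2},L_2^{w_1}}(\lambda_1,\lambda_2,\tau^{w_1w_2})$; I would verify this intertwining by a direct check on the $z_1,z_2$ components (noting $(\tau^{w_1}\lambda_1^{d_1}z_1)^{w_2}=\tau^{w_1w_2}(\lambda_1^{d_1})^{w_2}z_1^{w_2}$ and the analogous identity for the second factor, which is exactly what it means for the structure group to be $\bbt^{2}_\bbc$ acting with the rescaled line bundles $L_i^{w_{3-i}}$). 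This yields the fiber-preserving biholomorphism $h_\bfw$ onto $\bbp((L_1^*)^{w_2}\oplus(L_2^*)^{w_1})$, and the branching of $\tilde h_\bfw$ along $\{z_1=0\}$ and $\{z_2=0\}$, with orders $w_2$ and $w_1$ respectively, translates (after passing through the sections $D_1,D_2$) into the orbifold/log structure $\Delta_\bfw=(1-1/w_1)D_1+(1-1/w_2)D_2$. Here the sanity check with $\bfw=(1,1)$ recovering the regular quotient, and with $M_1^3=M_2^3$ recovering a product, is already recorded in the Remark.

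For the second part, I would start from equation \eqref{quasiKahlerClass}, which is (9) of \cite{BoTo20b} specialized to $d=1$ and expressed in the fiber polar coordinates $(r_j,\theta_j)$ on $L_j^*$. Substituting the moment-map normalizations $z_0=\tfrac12 r_1^2$, $z_\infty=\tfrac12 r_2^2$ and restricting to the level set $z_0+z_\infty=2$, so that $z=z_0-1=1-z_\infty$ ranges over $[-1,1]$ and $r_1\,dr_1=dz=-r_2\,dr_2$, one rewrites $w_1w_2\,d\eta_\bfw$ as $2(w_2\,d\eta_1+w_1\,d\eta_2)+2\,d(z\theta)$ with $\theta=w_2(\eta_1+d\theta_1)-w_1(\eta_2+d\theta_2)$ a connection form on $(L_1)^{w_2}\otimes(L_2^*)^{w_1}$. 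Recognizing the right-hand side as (a multiple of) the standard admissible-type K\"ahler form on the projectivization $\bbp(\BOne\oplus(L_1)^{w_2}\otimes(L_2^*)^{w_1})=\bbp((L_1^*)^{w_2}\oplus(L_2^*)^{w_1})$, and reading off cohomology classes — $[d\eta_j]=2\pi[\omega_j]$ with $c_1(L_j)=[\omega_j]$, and $[d(z\theta)]$ contributing the class $\Xi$ whose $1/(2\pi)$-multiple is Poincar\'e dual to $D_1+D_2$ — gives the transverse K\"ahler class $2\pi(w_2[\omega_1]+w_1[\omega_2])+\Xi$ up to the overall scale that we are free to discard.

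The routine parts here are the coordinate substitutions and the bookkeeping of $2\pi$ factors. The main obstacle, and the place requiring the most care, is the passage from the branched-cover picture to the precise boundary divisor coefficients $(1-1/w_i)$: one must be sure that the ramification of $\tilde h_\bfw$ is exactly along the two sections $D_1,D_2$ with the stated multiplicities and that no further orbifold locus is introduced by the $\cala$-quotient itself (i.e. that away from $D_1\cup D_2$ the quotient is a genuine manifold and $h_\bfw$ a biholomorphism, not merely a finite map). This is where invoking the detailed set-up of Section 3.6 of \cite{BoTo13} and Section A.3 of \cite{BoTo20b} does the real work, and I would lean on those references rather than reprove the orbifold structure from scratch.
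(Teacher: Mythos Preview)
Your proposal is correct and follows essentially the same approach as the paper: the proposition is indeed a summary of the computations in Section \ref{qrquotients}, and you have accurately reproduced both the covering-map argument giving the log pair $(\bbp((L_1^*)^{w_2}\oplus(L_2^*)^{w_1}),\Delta_\bfw)$ and the moment-map rewrite of $w_1w_2\,d\eta_\bfw$ yielding the transverse K\"ahler class. One small clarification: the divisor coefficients $(1-1/w_i)$ arise not so much from the ramification of $\tilde h_\bfw$ itself as from the fact that $h_\bfw$ transports the orbifold structure of the weighted projectivization $\bbp_\bfw$ (isotropy $\bbz_{w_1}$ and $\bbz_{w_2}$ along the two sections) onto the smooth target, which is then recorded as the log pair---but your end result and the matching of $D_i$ with $w_i$ are correct.
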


\begin{remark}\label{colinearsanitycheck}
We can do the following sanity check: If colinearity (see \cite{BoTo20b} for the definition) holds on top of the above assumptions, we have according to Proposition 15 of \cite{BoTo20b} that  the fiber join is just a regular $S^3_{\tilde{\bfw}}$-join as in \cite{BoTo14a}.
Here $\omega_i=b_i \omega_N$ for $[\omega_N]$ a primitive integer K\"ahler class. 
Connecting with the notation in \cite{BoTo14a} (setting $w_i$ from \cite{BoTo14a} equal to $\tilde{w}_i$) we have $l_1(\tilde{w}_1,\tilde{w}_2)=(b_1,b_2)$ and $l_2=1$.
Now Proposition \ref{d=1quasiregular} is consistent with Theorem 3.8 of \cite{BoTo14a} (with $v_i=w_i$) saying that
the quotient of $\xi_\bfw$ has $n= b_1w_2-b_2w_1$. Moreover, the transverse K\"ahler class is then
$$2\pi (w_2[\omega_1]+w_1[\omega_2]) + \Xi = 2\pi (b_1w_2+b_2w_1) [\omega_N] +\Xi = \frac{1}{r} [\omega_{N_n}] + \Xi$$ 
with $r_a=\frac{b_1w_2-b_2w_1}{b_1w_2+b_2w_1}$ and
$[\omega_{N_n}]:=2\pi n [\omega_N]=c_1((L_1)^{w_2}\otimes (L^*_2)^{w_1})$.
This is consistent with (44) and (59) in \cite{BoTo14a}. 
\end{remark}

\subsection{The General $d$ Case}\label{gendsec}
For the fiber join $M_\gw $ we have in particular that the complex manifold arising as the quotient of the regular Reeb vector field $\xi_1$ 
is equal to $\bbp\left(\oplus_{j=1}^{d+1} L^*_j\right) \rightarrow N$. Recall from \cite{BoTo20b} that this is an {\em admissible} projective bundle as defined in 
\cite{ACGT08} exactly when the following all hold true:
\begin{enumerate}
\item The base $N$ is a local product of K\"ahler manifolds $(N_a,\Omega_{N_a})$, $a \in \cala \subset \bbn$, where $\cala$ is a finite index set.
This means that there exist simply connected K\"ahler manifolds $N_{a}$ of complex dimension $d_a$ such that $N$ is covered by $\prod_{a\in \cala}N_a$. On each $N_a$ there is an $(1,1)$ form
$\Omega_{N_a}$, which is a pull-back of a tensor (also denoted by $\Omega_{N_a}$) on $N$, such that $\Omega_{N_a}$ is a K\"ahler form of a constant scalar curvature K\"ahler (CSCK) metric $g_a$. 
\item There exist $d_0, d_\infty \in \bbn \cup \{0\}$, with $d=d_0+d_\infty +1$, such that $E_0:= \oplus_{j=1}^{d_0+1} L^*_j$ and
$E_\infty := \oplus_{j=d_0+2}^{d_0+d_\infty +2} L^*_j$ are both projectively flat hermitian holomorphic vector bundles.
{\em This would, for example, be true if $L^*_j= L_0$ for $j=1,...,d_0+1$ and $L^*_j= L_\infty$ for $j=d_0+2,...,d_0+d_\infty+2$, where $L_0$ and $L_\infty$ are some holomorphic line bundles. That is, $E_0=L_0\otimes \bbc^{d_0+1}$ and $E_\infty = L_\infty\otimes\bbc^{d_\infty+1}$. 
More generally,
$c_1(L^*_1) =  \cdots = c_1(L^*_{d_0+1})$ and $c_1(L^*_{d_0+2})= \cdots = c_1(L^*_{d_0+d_\infty+2})$
would be sufficient.}
\item $\frac{c_1(E_\infty)}{d_\infty+1}-\frac{c_1(E_0)}{d_0+1}= \sum_{a\in \cala}  [\epsilon_a\Omega_{N_a}]$, where $\epsilon_a=\pm 1$.
\end{enumerate}
The K\"ahler cone of the total space of an admissible bundle $\bbp\left( E_0\oplus E_\infty\right) \rightarrow N$ has a subcone of so-called {\bf admissible K\"ahler classes} (defined in Section 1.3 of \cite{ACGT08}). This subcone has dimension $|\cala|+1$ and, in general, this is not the entire K\"ahler cone. However, by Remark 2 in \cite{ACGT08}, if $b_2(N_a)=1$ for all $a\in \cala$ and $b_1(N_a) \neq 0$ for at most one $a\in \cala$, then the entire K\"ahler cone is indeed admissible.

\subsection{Admissibility}
As briefly discussed in \cite{BoTo20b}, it is convenient to have refined notions of admissibility.

\begin{definition}\label{stradmdef}
Any fiber join $M_\gw$ where the quotient of the regular Reeb vector field $\xi_1$ 
is an admissible projective bundle will also be called {\bf admissible}.
If further the transverse K\"ahler class of the regular quotient is a pullback of an admissible K\"ahler class, then we call $M_\gw$ {\bf strongly admissible}.
\end{definition}

\begin{remark}
Note that in Definition 4.1 of \cite{BoTo20b} we introduced the condition of being {\bf super admissible}. There we required the entire K\"ahler cone of the regular admissible quotient to be admissible. Of course, if that is the case then in particular the transverse K\"ahler class of the regular quotient is a pullback of an admissible K\"ahler classes. Thus $M_\gw$ is strongly admissible if it is super admissible. In fact we have

\begin{proposition}\label{properincl}
Generally the inclusions
$$ {\rm super~ admissible} \subset {\rm strongly~ admissible} \subset {\rm admissible}$$
are proper.
\end{proposition}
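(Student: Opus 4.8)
The plan is to establish the two proper inclusions separately by exhibiting concrete families of Sasaki-Yamazaki fiber joins that witness the strictness. Recall that by Definition \ref{stradmdef} and the preceding discussion, ``admissible'' only requires that the regular quotient $\bbp(\oplus_j L_j^*)\to N$ be an admissible projective bundle (conditions (1)--(3) of Section \ref{gendsec}); ``strongly admissible'' adds that the \emph{particular} transverse K\"ahler class coming from the fiber join construction be (a pullback of) an admissible K\"ahler class; and ``super admissible'' demands that the \emph{entire} K\"ahler cone of the regular quotient be admissible. The inclusions themselves are noted in the Remark; only properness needs proof.

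\textbf{Strongly admissible $\not\subset$ super admissible.} Here I would pick $N$ to be a local K\"ahler product with at least two factors, say $N=N_1\times N_2$ with $b_2(N_a)=1$ for each but where the admissible subcone of $\bbp(E_0\oplus E_\infty)\to N$ has dimension $|\cala|+1=3$, strictly smaller than the full K\"ahler cone dimension. Concretely one can take a surface or higher-dimensional $N$ whose $b_2$ exceeds $|\cala|+1$ once one forms the projectivization (the zero and infinity divisor classes $D_0,D_\infty$ together with the pullbacks of $\Omega_{N_a}$ span only a proper subcone), so that the bundle is admissible but not super admissible; then choose the line bundles $L_j^*$ so that conditions (1)--(3) hold with $E_0,E_\infty$ projectively flat, and verify via the explicit transverse K\"ahler class computed in Section \ref{gendsec} (the generalization of Proposition \ref{d=1quasiregular}) that this class lands in the admissible subcone. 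This makes $M_\gw$ strongly admissible but not super admissible. The existing examples in the literature (\cite{ACGT08}, and the $S^3$-joins of \cite{BoTo14a}) already provide ruled manifolds over products of curves or over $N$ with Picard number $>|\cala|+1$ where this gap is visible, so I would cite or lightly adapt one of those.

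\textbf{Admissible $\not\subset$ strongly admissible.} Here the idea is to exhibit a fiber join whose regular quotient is an admissible projective bundle, but where the specific transverse K\"ahler class $2\pi(w_2[\omega_1]+w_1[\omega_2])+\Xi$ (in the $d=1$ case of Proposition \ref{d=1quasiregular}, or its general-$d$ analogue) is \emph{not} an admissible K\"ahler class — i.e.\ it lies in the K\"ahler cone of $\bbp(E_0\oplus E_\infty)$ but outside the admissible subcone. Since the admissible subcone is genuinely proper when $N$ has enough $H^2$, one arranges the coefficients by choosing the K\"ahler forms $\omega_1,\omega_2$ on $N$ (equivalently the $[\omega_j]=c_1(L_j)$) to have components transverse to the admissible directions: pick $N=N_1\times N_2$ and let the classes $[\omega_1],[\omega_2]$ pair nontrivially with both K\"ahler factors in a way not proportional to the admissible combination $\sum_a \epsilon_a\Omega_{N_a}$ dictated by condition (3). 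Then the bundle remains admissible (conditions (1)--(3) constrain only $c_1(E_\infty)/(d_\infty+1)-c_1(E_0)/(d_0+1)$, not the individual $[\omega_j]$ or the polarizing class), but the induced transverse K\"ahler class is not a pullback of an admissible class, so $M_\gw$ is admissible and not strongly admissible.

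The main obstacle I anticipate is the bookkeeping in the second inclusion: one must simultaneously keep conditions (1)--(3) satisfied (so the bundle stays admissible) while steering the fiber-join K\"ahler class out of the admissible subcone, and this requires a clean description of the admissible subcone inside $H^{1,1}$ of the projectivization — essentially the statement that it is spanned by $[D_0+D_\infty]$ (or $\Xi$) and the $\{[\Omega_{N_a}]\}$, with the fiber-join construction free to add an arbitrary admissible-constraint-preserving perturbation of the $[\omega_j]$. Once the dimension count ``$|\cala|+1 < \dim H^{1,1}_{\text{K\"ahler cone}}$'' is available for a suitable $N$ (which is exactly the content of the failure of Remark 2 of \cite{ACGT08} when $b_2(N_a)>1$ for some $a$, or when $b_1(N_a)\neq 0$ for more than one $a$), both properness statements follow by producing explicit witnesses, and one may in fact reuse the dimension-5 and dimension-7 examples constructed later in Section \ref{Exsect} as the concrete witnesses, making the proof essentially a forward reference plus the observation that the admissible subcone is proper there.
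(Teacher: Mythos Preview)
Your general strategy---witness each proper inclusion by an explicit fiber join---is exactly what the paper does (it defers to Examples \ref{admnotstrong} and \ref{2ndex}), and your outline for ``strongly admissible but not super admissible'' is essentially correct: taking $N=\Sigma_g\times\Sigma_g$ with $g>1$ and product-class line bundles $c_1(L_j)=k^1_j[\Omega_1]+k^2_j[\Omega_2]$ (as in Section \ref{highergenusprod}) gives a strongly admissible fiber join whose quotient cannot be super admissible because the K\"ahler cone of $N$ contains the diagonal class $\delta$, not just product classes. So your forward reference works for that inclusion.

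The genuine gap is in the second inclusion. Your claim that the Section \ref{Exsect} examples can double as witnesses for ``admissible but not strongly admissible'' is wrong: every fiber join built there takes $c_1(L_j)$ in the span of the CSC factor classes $[\Omega_{N_a}]$, so the regular transverse K\"ahler class from Proposition \ref{d=1quasiregular} is automatically of the form $\sum_a \tfrac{1}{x_a}[\Omega_{N_a}]+\Xi$, i.e.\ admissible. None of those examples is merely admissible without being strongly admissible. What is actually needed---and what your sketch does not supply---is a positive integral K\"ahler class on $N$ lying \emph{outside} the span of the $[\Omega_{N_a}]$, yet chosen so that the \emph{difference} $c_1(L_1)-c_1(L_2)$ lands back inside that span (so condition (3) of Section \ref{gendsec} still holds). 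The paper engineers this with the diagonal divisor class $\delta$ on $\Sigma_g\times\Sigma_g$: taking $c_1(L_j)=l_{g+j+1}=(g+j)(\gamma_1+\gamma_2)+\delta$ makes the difference $\gamma_1+\gamma_2$ admissible while the sum, and hence the transverse class, carries a $2\delta$ term that cannot be written in admissible form. Example \ref{2ndex} gives a second mechanism, using an \emph{irreducible} CSC base $(N,\Omega_N)$ with $b_2(N)>1$ so that $|\cala|=1$ and the admissible subcone is only $2$-dimensional. Without identifying one of these devices, your proposal does not close the second inclusion.
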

\end{remark}

The proof of this proposition is a consequence of either of the Examples \ref{admnotstrong} or \ref{2ndex} below.

\begin{example}\label{admnotstrong}
Let $\Sigma_g$ be a Riemann surface of genus $g>1$ and let $\omega_{\Sigma_g}$ denote the unit area K\"ahler form of the constant scalar curvature K\"ahler metric on $\Sigma_g$.
Now consider $N=\Sigma_g\times\Sigma_g$ (i.e. $N_1=N_2=\Sigma_g$) and let $\pi_a$ denote the projection from $N$ to the $a^{th}$ factor. Then $\gamma_a:=[\pi_a^*\omega_{\Sigma_g}]\in H^2(N,\bbz)$.
Let $\delta \in H^2(N,\bbz)$ denote the Poincar\'e dual of the diagonal divisor in $N$ defined by the diagonal curve $\{(x,x)\,|\,x\in \Sigma_g\}$. Then from Theorem 3.1 of \cite{Shu} (which uses Nakai's criterion for ample divisors), we
know that $l_s:=(s-1)(\gamma_1+\gamma_2)+\delta \in H^2(N,\bbz)$ is in the K\"ahler cone of $N$ if and only if $s>g$.

Now we form a $d=1$ Yamazaki fiber join by choosing line bundles $L_1$ and $L_2$ over $N$ such that
$c_1(L_1)=[\omega_1]=l_{g+2}$ and $c_1(L_2)=[\omega_2]=l_{g+1}$. In the above setting $L_1^*=E_0$ and $L_2^*=E_\infty$ and we easily see that the fiber join is indeed admissible with
 $c_1(E_\infty)-c_1(E_0)=c_1(L_1)-c_1(L_2)=l_{g+2}-l_{g+1}=\gamma_1+\gamma_2$. Specifically, the regular quotient equals the admissible bundle
 $$S_g:=\bbp\left(L_1^*\oplus L_2^*\right) \rightarrow N=\bbp\left(\BOne \oplus L_1\otimes L_2^*\right) \rightarrow N=
\bbp\left(\BOne \oplus \calo(1,1)\right) \rightarrow \Sigma_g\times\Sigma_g.$$ Note that with the above notation
 $[\Omega_{N_a}]=\gamma_a$ (and $\epsilon_a=+1$). 
On $S_g$, the admissible K\"ahler classes are up to scale of the form $$\frac{1}{x_1}[\Omega_{N_1}]+\frac{1}{x_2}[\Omega_{N_2}]+\Xi,$$
where $0<x_a<1$ (following Section 1.3 of \cite{ACGT08}). According to Proposition \ref{d=1quasiregular}, the regular transverse K\"ahler class is, up to scale, the pull-back
 of $2\pi([\omega_1] + [\omega_2]) + \Xi$. This equals 
 $$2\pi(l_{g+2}+l_{g+1})+\Xi=2\pi((2g+1)(\gamma_1+\gamma_2)+2\delta)+\Xi=2\pi\bigl((2g+1)[\Omega_{N_1}]+(2g+1) [\Omega_{N_2}]+2\delta\bigr)+\Xi$$
 which due to the ``$2\delta$'' bit is not an admissible K\"ahler class. Therefore, the fiber join is not strongly admissible.
 
 Furthermore, it is possible to chose the line bundles $L_1$ and $L_2$ so that the fiber join is strongly admissible (cf. Section 5), but it will never be super admissible due to the fact that the K\"ahler cone of $N$ consist of more than just product classes and thus there are non-admissible K\"ahler classes on the total space of the $\bbc\bbp^1$-bundle of the regular quotient. Hence, the inclusions in Proposition \ref{properincl} are proper.
 \end{example}

 \begin{example}\label{2ndex}
Another example of admissible but not strongly admissible is the following case. Let $N=\bbp(\BOne \oplus \calo(1,-1)) \rightarrow \bbc\bbp^1\times\bbc\bbp^1$.
Let $\Omega_{FS}$ denote the standard Fubini-Study K\"ahler form on $\bbc\bbp^1$, let $\pi_i$ denote the projection from $N$ to the $i^{th}$ factor in the product
$\bbc\bbp^1\times\bbc\bbp^1$, and let $\chi$ denote the Poincar\'e dual of $2\pi(D^N_1+D^N_2)$, where $D^N_1$, $D^N_2$ are the zero and infinity sections of
$N \rightarrow \bbc\bbp^1\times\bbc\bbp^1$.
Now consider the two CSC K\"ahler forms $\omega_1$ and $\omega_2$ on $N$ with K\"ahler classes
$$[\omega_1]=2\left(3[\pi_1^*\Omega_{FS}] +3[\pi_2^*\Omega_{FS}] +\frac{\chi}{2\pi}\right)$$
and
$$[\omega_2]=2[\pi_1^*\Omega_{FS}] +2[\pi_2^*\Omega_{FS}] +\frac{\chi}{2\pi},$$ 
respectively.
(See e.g. Theorem 9 in \cite{ACGT08} to confirm that $[\omega_1]$ and $[\omega_2]$ are indeed represented by CSC K\"ahler forms.)

Now we form a $d=1$ Yamazaki fiber join by choosing line bundles $L_1$ and $L_2$ over $N$ such that
$c_1(L_1)=[\omega_1]$ and $c_1(L_2)=[\omega_2]$. In the above setting $L_1^*=E_0$ and $L_2^*=E_\infty$ and we easily see that the fiber join is indeed admissible with
$c_1(E_\infty)-c_1(E_0)=c_1(L_1)-c_1(L_2)=4[\pi_1^*\Omega_{FS}] +4[\pi_2^*\Omega_{FS}] +\frac{\chi}{2\pi}$. Specifically, the regular quotient equals the admissible bundle
 $$S:\bbp\left(\BOne \oplus L\right) \rightarrow N$$ such that 
 $c_1(L)= [\Omega_{N}]:=4[\pi_1^*\Omega_{FS}] +4[\pi_2^*\Omega_{FS}] +\frac{\chi}{2\pi}$ and
$\Omega_N$ is a CSC K\"ahler form on $N$. Note that $S$ is a so-called {\em stage four Bott manifold} given by the matrix
$$
A=\begin{pmatrix}
1&0&0&0\\
0&1&0&0\\
1&-1&1&0\\
5&3&2&1
\end{pmatrix}.$$
[See e.g. Section 1 of \cite{BoCaTo17} for details.] It is important to note that the CSC K\"ahler manifold $(N,\Omega_N)$ is irreducible in the sense that
for (1) at the beginning of Subsection \ref{gendsec}, $\cala$ must be just $\{1\}$.

Following Section 1.3. in \cite{ACGT08}, we have that on $S$, the admissible K\"ahler classes are up to scale of the form 
$$\frac{2\pi}{x}[\Omega_{N}]+\Xi,$$
where $0<x<1$, $\Xi$ denote the Poincare dual of $2\pi(D_1+D_2)$, and  $D_1,D_2$ are the zero and infinity sections, 
respectively, of the bundle $\bbp(\BOne \oplus L) \rightarrow N$.  

According to Proposition \ref{d=1quasiregular}, the regular transverse K\"ahler class is, up to scale, the pull-back
 of $2\pi([\omega_1] + [\omega_2]) + \Xi$. This equals 
 $$2\pi(8[\pi_1^*\Omega_{FS}] +8[\pi_2^*\Omega_{FS}]+3 \frac{\chi}{2\pi}) + \Xi$$
which cannot be written as (the rescale of) $\frac{2\pi}{x}[\Omega_{N}]+\Xi$ for any $0<x<1$.
Thus this is not an admissible K\"ahler class and therefore the fiber join is not strongly admissible.
\end{example}

\subsection{The Main Theorems}\label{mainthm}
For Theorems \ref{1stroundexistence} and \ref{2ndroundexistence} below, we only need the strongly admissible condition. In \cite{BoTo20b} we used the above observations together with existence results in \cite{Gua95}, \cite{Hwa94}, and \cite{HwaSi02} (specifically, the slight generalization in the form of Propostion 11 of \cite{ACGT08}) to prove the following theorem:

\begin{theorem}[\cite{BoTo20b}]\label{1stroundexistence}
Let $M_\gw$ be a strongly admissible fiber join whose regular quotient is a ruled manifold of the form $\bbp(E_0\oplus E_\infty)\longrightarrow N$ where $E_0 ,E_\infty$ are projectively flat hermitian holomorphic vector bundles on $N$ of complex dimension $(d_0+1),(d_\infty+1)$ respectively, and $N$ is a local K\"ahler product of non-negative CSC metrics. Then the Sasaki cone of $M_\gw$ has an open set of extremal Sasaki metrics (up to isotopy).
\end{theorem}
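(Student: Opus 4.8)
The plan is to reduce the assertion to the existence of extremal \emph{transverse} K\"ahler metrics, feed that into the admissible existence theory of \cite{ACGT08} (which in turn rests on \cite{Gua95,Hwa94,HwaSi02}), and then pass from a low-dimensional family of Reeb vector fields to a genuinely open set by invoking the openness of extremality in the Sasaki cone.

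First I would fix the quotient picture. Inside the $(d+1)$-dimensional Sasaki cone $\gt^+(M_\gw)$ there is a distinguished $2$-dimensional subcone $\gt^+_0$, namely the Reeb vector fields whose weight vector is constant on the $E_0$-block $(w_1,\dots,w_{d_0+1})$ and (separately) constant on the $E_\infty$-block $(w_{d_0+2},\dots,w_{d_0+d_\infty+2})$; the regular Reeb field $\xi_1$ lies in $\gt^+_0$. For quasi-regular $\xi\in\gt^+_0$ the K\"ahler quotient is, by Proposition \ref{d=1quasiregular} for $d=1$ and by its general-$d$ analogue from \cite{BoTo20b}, an admissible log pair $(\bbp(\tilde{E}_0\oplus\tilde{E}_\infty),\Delta)$ over $N$, where $\tilde{E}_0,\tilde{E}_\infty$ are line-bundle twists of $E_0,E_\infty$ and hence still projectively flat, and where the transverse K\"ahler class is the pull-back of an \emph{admissible} orbifold K\"ahler class. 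This last point is exactly what strong admissibility (Definition \ref{stradmdef}) provides: it forces the transverse class of $\xi_1$, and therefore by transverse scaling the class of every $\xi\in\gt^+_0$, to be admissible rather than merely to lie in an admissible bundle. The remaining hypotheses --- $N$ a local K\"ahler product of non-negative CSCK metrics, $E_0,E_\infty$ projectively flat --- are inherited unchanged by each of these quotients.

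Next I would quote the existence result. By Proposition 11 of \cite{ACGT08} (the log-pair version of \cite{Gua95,Hwa94,HwaSi02}), an admissible K\"ahler class on $\bbp(\tilde{E}_0\oplus\tilde{E}_\infty)\to N$ contains an extremal K\"ahler metric whenever the associated extremal polynomial $F$ is positive on $(-1,1)$; and $F>0$ on $(-1,1)$ holds automatically once every base factor $N_a$ carries a CSCK metric of non-negative scalar curvature (with the signs $\epsilon_a$ as in Subsection \ref{gendsec}). Hence each quotient above carries an extremal K\"ahler orbifold metric in its prescribed class. By the Boyer--Galicki--Simanca correspondence this lifts to an extremal Sasaki metric in $\calS(M_\gw,\xi)$ for every quasi-regular $\xi\in\gt^+_0$; running the same admissible construction transversally on $M_\gw$ itself, rather than on a quotient, extends this to all $\xi\in\gt^+_0$ (the phrase ``up to isotopy'' absorbing the contact isotopy needed to bring a chosen structure into admissible form).

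Finally I would inflate $\gt^+_0$ to an open set. By the openness of extremality in the Sasaki cone \cite{BGS06} --- the same mechanism used in \cite{BoTo14a,BoTo20b} --- the set of $\xi\in\gt^+(M_\gw)$ admitting a compatible extremal Sasaki metric up to isotopy is open, and since it contains the nonempty subcone $\gt^+_0$ it is a nonempty open subset of the Sasaki cone, which is the claim. The one place that needs real work is the positivity step hidden above: verifying that ``strongly admissible'' together with ``$N$ a local product of non-negative CSCK metrics'' is precisely enough to guarantee $F>0$ on $(-1,1)$ for all of the transverse classes arising along $\gt^+_0$ --- i.e. that strong admissibility, which is weaker than the super admissibility assumed in \cite{BoTo20b}, still suffices. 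Everything else is bookkeeping: tracking how the admissible data $(d_0,d_\infty,\{\Omega_{N_a}\},\epsilon_a)$ transforms under passage to the quasi-regular quotient and under transverse scaling, and checking that those quotients are honest log pairs to which the theory of \cite{ACGT08} applies.
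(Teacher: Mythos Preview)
Your proposal is correct and follows essentially the same route the paper indicates: the proof in the paper is not spelled out but is attributed to \cite{BoTo20b} together with Proposition~11 of \cite{ACGT08} (itself resting on \cite{Gua95,Hwa94,HwaSi02}), and your outline reproduces exactly that chain of reductions --- strong admissibility forces the regular transverse class to be admissible, the non-negative CSC hypothesis on the base factors guarantees positivity of the extremal polynomial, hence extremality of the regular Sasaki structure, and then openness \cite{BGS06} inflates this to an open set.

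One minor remark: you do more than is strictly required. For Theorem~\ref{1stroundexistence} as stated one only needs a \emph{single} extremal Reeb ray (the regular one $\xi_{\mathbf 1}$ suffices) plus openness; your analysis of the entire $2$-dimensional subcone $\gt^+_0$ and its quasi-regular quotients is the extra ingredient that the paper reserves for the sharper Theorem~\ref{2ndroundexistence} (via \cite{BHLT22}), where one needs to move within that subcone to locate a CSC ray. Your argument is not wrong, just slightly front-loaded.
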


Together with E. Legendre and H. Huang we recently obtained the following result on admissible K\"ahler manifolds:

\begin{theorem}[Theorem 3.1 in \cite{BHLT22}]\label{bhlt22}
Suppose $\Omega$ is a rational admissible K\"ahler class on the admissible manifold $N^{ad}=\bbp(E_0 \oplus E_{\infty}) \lra N$, where $N$ is a compact K\"ahler manifold which is a local product of nonnegative CSCK metrics. 
Let $(M,\cals)$ be the Boothby-Wang constructed  Sasaki manifold given by an appropriate rescale of $\Omega$. Then the corresponding Sasaki-Reeb cone will always have a
(possibly irregular) CSC-ray (up to isotopy).
\end{theorem}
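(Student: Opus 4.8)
The plan is to reduce the CSC Sasaki problem to the explicit one-variable ODE furnished by the admissible ansatz of \cite{ACGT08}, and then to produce the CSC ray by solving that ODE for a suitable Reeb field. First I would pass from the rational admissible class $\Omega$ to an integral primitive representative (the ``appropriate rescale''), so that the Boothby--Wang construction yields an honest circle bundle $M$ whose regular Reeb field $\xi_0$ has transverse K\"ahler structure exactly $(N^{ad},\Omega)$. The admissible torus acting on $N^{ad}=\bbp(E_0\oplus E_\infty)\lra N$ (the fiberwise circle on the two blocks $E_0,E_\infty$ together with the Boothby--Wang circle) lifts to $M$, so the Sasaki-Reeb cone contains at least a two-dimensional subcone through $\xi_0$; I would parametrize the Reeb fields in this subcone by the pair of weights at the zero and infinity sections, exactly as in the quasi-regular analysis of Subsection \ref{qrquotients}.

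For each Reeb field $\xi_{\mathbf b}$ in this subcone the transverse K\"ahler geometry is again admissible (on a log/orbifold weighted projective bundle when $\xi_{\mathbf b}$ is quasi-regular), with a momentum variable $z$ ranging over an interval determined by $\mathbf b$. Following \cite{ACGT08}, I would write the transverse scalar curvature in terms of the characteristic polynomial $p_c(z)=\prod_a(1+x_az)^{d_a}$ and the momentum profile through $F(z)=\Theta(z)\,p_c(z)$, so that the constant-scalar-curvature condition becomes the linear ODE $F''(z)=p_c(z)\bigl(\sum_a d_a\,\mathrm{Scal}_a/(1+x_az)-\mathrm{Scal}_{\mathrm{const}}\bigr)$. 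Because the base factors $(N_a,g_a)$ are CSCK, each $\mathrm{Scal}_a$ is a constant, so the right-hand side is a polynomial and $F$ is recovered by two quadratures; the two integration constants together with $\mathrm{Scal}_{\mathrm{const}}$ are fixed by the endpoint conditions $F(z_\pm)=0$ and $F'(z_\pm)=\mp 2\,p_c(z_\pm)$ that guarantee smooth closing-up of the metric over the two sections. This produces, for every $\mathbf b$, a unique candidate CSC profile $F_{\mathbf b}$.

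The remaining --- and hardest --- step is positivity: the candidate $F_{\mathbf b}$ yields a genuine Sasaki metric only when $F_{\mathbf b}>0$ on the open momentum interval. The hard part will be to show that the set of weights $\mathbf b$ for which this positivity holds is nonempty, and it is precisely here that the hypothesis that $N$ is a local product of \emph{nonnegative} CSCK metrics is used: nonnegativity of each $\mathrm{Scal}_a$ controls the sign of the inhomogeneous term in the ODE, hence the concavity of $F_{\mathbf b}$, forcing $F_{\mathbf b}$ to remain positive for an appropriate range of $\mathbf b$. Equivalently, one may phrase this as an intermediate-value argument: the extremal profiles exist for an open set of $\mathbf b$ (by the admissible extremal construction of \cite{ACGT08}, cf. Theorem \ref{1stroundexistence}), the deviation-from-CSC --- the slope of the extremal affine Killing potential, i.e. the relevant component of the transverse Futaki invariant --- is a smooth function of $\mathbf b$ that is invariant under transverse scaling, and one shows it changes sign as $\mathbf b$ sweeps the subcone, yielding a zero. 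Either route produces a Reeb field --- possibly irregular, since the zero need not be a rational point of the cone --- whose transverse scalar curvature is constant. Finally, since the transverse scaling \eqref{transscale} preserves constancy of the scalar curvature, this single CSC Sasaki metric generates an entire CSC ray (up to isotopy, absorbing the contact deformation in $\calS(M,\xi)$ needed to realize the chosen transverse K\"ahler form), which is the assertion of Theorem \ref{bhlt22}.
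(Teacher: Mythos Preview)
This theorem is not proved in the present paper; it is quoted from \cite{BHLT22}, and the paper only comments that ``The proof of this theorem (Section 3.1 of \cite{BHLT22}) reveals that this CSC Sasaki metric lies in a $2$-dimensional subcone of $\gt^+(\cals)$ which is \emph{exhausted} by extremal Sasaki metrics.'' The explicit computations later in the paper (equations \eqref{scsc5mnf}, \eqref{scsc7mnf} and the surrounding discussion) illustrate the mechanism: one parametrizes the $2$-subcone by $c\in(-1,1)$ via the CR-twist $f=c\gz+1$; under the nonnegativity hypothesis the extremal profile $F_c(\gz)$ is positive for \emph{every} $c\in(-1,1)$ (this is where Proposition~11 of \cite{ACGT08} and the results of \cite{Gua95,Hwa94,HwaSi02} enter); and then the CSC condition reduces to a polynomial equation $h(c)=0$ whose values at the endpoints satisfy $h(-1)<0<h(1)$, so the intermediate value theorem produces a root, possibly irrational.

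Your outline captures the right ingredients but inverts the logic in a way that leaves a genuine gap. You first build the CSC candidate $F_{\mathbf b}$ and then try to verify positivity for \emph{that particular} $\mathbf b$, arguing that nonnegativity of $\mathrm{Scal}_a$ controls concavity ``for an appropriate range of $\mathbf b$.'' But you never argue that the CSC root actually lands in that range. Your alternative IVT route has the same defect: you say extremal metrics exist on ``an open set'' of $\mathbf b$ and the Futaki-type quantity changes sign across the subcone, but a sign change across $(-1,1)$ does not force the zero to lie in an unspecified open subset where extremality holds. The actual argument closes this gap by proving something stronger first: under the nonnegative CSCK hypothesis the \emph{entire} $2$-subcone is extremal (positivity of $F_c$ for all $c$), so wherever the IVT places the root, it is automatically a genuine CSC ray. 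You should reorganize accordingly: establish full-subcone extremality from the nonnegativity hypothesis, then compute the endpoint signs of the CSC functional (as in $h(\pm1)=\pm 24(1\mp x_1)^2(1\mp x_2)^2$ in \S\ref{highergenusprod}) to force a root.
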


The proof of this theorem (Section 3.1 of \cite{BHLT22}) reveals that this CSC Sasaki metric lies in a 2-dimensional subcone of $\gt^+(\cals)$ which is exhausted by extremal Sasaki metrics.
Further, since this subcone is constructed via Killing potentials coming from a moment map induced by a fiber wise $S^1$-action on the admissible bundle, it is clear that
this is also a subcone of $\gt^+_{sph}$ (and all of $\gt^+_{sph}$ when $d=1$). Recall from Section 2.2 of \cite{BoTo20b} that $\gt^+_{sph}$ is defined to be the natural $(d+1)$-subcone of the Sasaki-Reeb cone of $M_\gw$ coming from considering the standard Sasaki CR structure on $S^{2d+1}$.
In light of all this, we can thus easily improve Theorem \ref{1stroundexistence} to give Theorem \ref{2ndroundexistenceintro} in the Introduction, namely

\begin{theorem}\label{2ndroundexistence}
Let $M_\gw$ be a strongly admissible Yamazaki fiber join whose regular quotient is a ruled manifold of the form $\bbp(E_0\oplus E_\infty)\longrightarrow N$ where $E_0 ,E_\infty$ are projectively flat hermitian holomorphic vector bundles on $N$ of complex dimension $(d_0+1),(d_\infty+1)$ respectively, and $N$ is a local K\"ahler product of non-negative CSC metrics. Then $\gt^+_{sph}$ has a $2$-dimensional subcone of extremal Sasaki metrics (up to isotopy) which contains at least one ray of CSC Sasaki metrics.
\end{theorem}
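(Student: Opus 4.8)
The plan is to deduce Theorem \ref{2ndroundexistence} by combining Theorem \ref{1stroundexistence} with Theorem \ref{bhlt22}, using the bookkeeping already assembled in the discussion preceding the statement. First I would recall from Definition \ref{stradmdef} that ``strongly admissible'' means precisely that the regular quotient $\bbp(E_0\oplus E_\infty)\to N$ is admissible \emph{and} its transverse K\"ahler class is the pullback of an admissible K\"ahler class $\Omega$ on $N^{ad}=\bbp(E_0\oplus E_\infty)$. Since the construction is algebraic, this class may be taken rational (rescaling within the admissible subcone keeps us inside it, by the description in Section 1.3 of \cite{ACGT08}), so the hypotheses of Theorem \ref{bhlt22} are met by $(N^{ad},\Omega)$ together with the standing hypothesis that $N$ is a local K\"ahler product of non-negative CSCK metrics.

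Next I would identify the Boothby--Wang manifold of Theorem \ref{bhlt22} with (a representative in the isotopy class of) the regular Reeb vector field $\xi_1$ of $M_\gw$. Indeed, by the diagram \eqref{bundiag} and the discussion in Section \ref{gendsec}, the quotient of $\xi_1$ is exactly $\bbp(\oplus_{j=1}^{d+1}L_j^*)=\bbp(E_0\oplus E_\infty)\to N$, and the regular transverse K\"ahler class is (the appropriate rescale of) $\Omega$; thus the Boothby--Wang Sasaki manifold constructed from $\Omega$ is $(M_\gw,\cals_1)$ up to the usual transverse scaling and isotopy. Theorem \ref{bhlt22} then supplies a (possibly irregular) CSC ray in $\gt^+(\cals_1)$, and — as emphasized in the paragraph before the statement, citing Section 3.1 of \cite{BHLT22} — this ray sits inside a $2$-dimensional subcone of $\gt^+(\cals_1)$ every ray of which carries an extremal Sasaki metric.

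The remaining point is to check that this $2$-dimensional subcone actually lies inside $\gt^+_{sph}$. Here I would invoke the observation from Section 2.2 of \cite{BoTo20b} that $\gt^+_{sph}$ is the $(d+1)$-dimensional subcone coming from the standard torus acting fiberwise on $S^{2d+1}$, and note that the extremal subcone in Theorem \ref{bhlt22} is generated by Killing potentials arising from the moment map of a fiberwise $S^1$-action on the admissible bundle $\bbp(E_0\oplus E_\infty)$ — an action that lifts to the fiberwise torus action on $M_\gw$. Hence each Reeb field in that subcone is a positive combination of $\xi_1$ and the generator of this fiberwise $S^1$, both of which lie in $\gt^+_{sph}$; when $d=1$ this fiberwise contribution exhausts $\gt^+_{sph}$, and for $d>1$ it is still a subcone of it. Combining: $\gt^+_{sph}$ contains a $2$-dimensional subcone of extremal Sasaki metrics (up to isotopy) one of whose rays is CSC, which is the assertion.

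I expect the only genuine subtlety — rather than a true obstacle — to be the compatibility of the isotopy and transverse-scaling identifications: one must make sure that the extremal subcone produced by the analysis in \cite{BHLT22} for the Boothby--Wang model is carried, under the identification with $(M_\gw,\cals_1)$, into the span of $\gt^+_{sph}$ and not merely into the larger Sasaki cone $\gt^+(\cald,J)$. This is exactly the content of the ``moment map from a fiberwise $S^1$-action'' remark, so the argument is essentially a matter of unwinding definitions; the hard analytic work (existence of the CSC metric and the extremality of the whole subcone) has already been done in Theorems \ref{1stroundexistence} and \ref{bhlt22}.
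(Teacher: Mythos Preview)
Your proposal is correct and follows essentially the same approach as the paper: the theorem is deduced directly from Theorem \ref{bhlt22} applied to the strongly admissible regular quotient, together with the observation (made in the paragraph immediately preceding the statement) that the 2-dimensional extremal subcone produced in \cite{BHLT22} comes from the fiberwise $S^1$-action and hence lies in $\gt^+_{sph}$. Your treatment is if anything more careful than the paper's, which simply says ``In light of all this, we can thus easily improve Theorem \ref{1stroundexistence}''; note also that rationality of the transverse K\"ahler class is automatic here since the fiber join is built from integral classes $[\omega_j]=c_1(L_j)$, so your appeal to rescaling is unnecessary.
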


\section{Further Examples}\label{Exsect}
In this section we work out the details of examples of fiber joins in dimensions 5 and 7. We consider only the case with $d=1$, i.e. $d_0=d_\infty=0$. So we have an $S^3$ bundle $M$, which we shall assume to be strongly admissible, over a smooth projective algebraic variety $N$. We begin with the simplest case, namely where $N$ is a Riemann surface, so the simplest fiber join is of dimension 5. Even in this case the geometry is quite involved. Note that the genus $g=0$ case is a straightforward special case of Theorem \ref{2ndroundexistence} whose Sasaki cone is strictly larger than $\gt^+_{sph}$; hence, we concentrate on $g\geq 1$.

In this case the fiber $\bbc\bbp^d[\bfw]$ is the log pair $(\bbc\bbp^1,\grD_\bfw)$ with branch divisors
$$\grD_\bfw=\bigl(1-\frac{1}{w_1}\bigr)D_1 +\bigl(1-\frac{1}{w_2}\bigr)D_2.$$
Here we have $c_1(L_\infty)-c_1(L_0)=\sum_a[\gre_a\grO_a]$. In order to construct a non-colinear fiber join of this kind we must have the Picard number $\grr(N)\geq 2$. 
In this case we may see the rays determined by $\xi_\bfw$ explicitly as $CR$-twists of the regular quotient \cite{ApCa18}. 
Indeed, following the notation of Section 3 of \cite{BHLT22}, on the regular quotient,
$N^{ad} =   \bbp\bigl({\BOne} \oplus (L_0^*\otimes L_\infty)\bigr) \rightarrow N$,
we have a moment map $\gz: N^{ad} \rightarrow [-1,1]$.
A choice of $c\in (-1,1)$ creates a new Sasaki structure (with Reeb vector field $\xi_c$) on $M_\gw$ via the lift of $f=c\gz+1$ from $S$ to $M_\gw$.
In turn, this lift may be identified with $c\,z+1$, where $z:= z_0-1=1-z_\infty$ is given in the discussion above Proposition \ref{d=1quasiregular}. In particular, $z_0$ and $z_\infty$  are the moment maps of the natural 
$S^1$ action on $L^*_1$ and $L^*_2$, respectively. Thus, the weighted combination, $w_1 z_0+w_2 z_\infty$, should define the Reeb vector field $\xi_\bfw$ and since
$$w_1 z_0+w_2 z_1 = (w_1-w_2) z +(w_1+w_2)=(w_1+w_2)(\frac{w_1-w_2}{w_1+w_2} z +1),$$
we see that (up to scale) $\xi_\bfw$ corresponds to choosing $c=\frac{w_1-w_2}{w_1+w_2} $ in the $CR$-twist.


\subsection{$N=\Sigma_g$, a compact Riemann surface of genus $g\geq1$.}
It is well known that if $N=\grS_g$, a Riemann surface of genus $g$, then an odd dimensional  sphere bundle $M$ over $N$ is diffeomorphic to the trivial bundle $S^{2d+1}\times \grS_g$ or the unique non-trivial bundle $S^{2d+1}\tilde{\times} \grS_g$ \cite{Stee51}. We will consider $d=1$ fiber joins over $N=\Sigma_g$. Since these are necessarily colinear, they have earlier been treated as $S^3_\bfw$ joins
\cite{BoTo13}, but not in the setting of Yamazaki fiber joins. 
Let $\omega_{\Sigma_g}$ denote the unit area K\"ahler form of the constant scalar curvature K\"ahler metric on $\Sigma_g$ and 
let $k_1>k_2>0$ be integers (the case $0<k_1< k_2$ is completely similar) and let $L_1,L_2$ be holomorphic line bundles over $\Sigma_g$ such that $c_1(L_i)=k_i [\omega_{\Sigma_g}]$. The corresponding 
$d=1$ Yamazaki fiber join, $M_\bfk=S(L_1^*\oplus L_2^*) \rightarrow \Sigma_g$ has regular quotient $S_\bfn =   \bbp\bigl({\BOne} \oplus \calo(k_1-k_2)\bigr) \rightarrow \Sigma_g$ and regular transverse K\"ahler class equal (up to scale) 
to the admissible K\"ahler class $2\pi(k_1+k_2)[\omega_{\Sigma_g}] + \Xi$, which we can write as
$\frac{1}{x}\left( 2\pi (k_1-k_2)[\omega_{\Sigma_g}] \right)+\Xi$ with $x=\frac{k_1-k_2}{k_1+k_2}$. [See Remark \ref{colinearsanitycheck}.]
Note that since $g\geq 1$, we have that the Sasaki cone equals the $2$-dimensional cone $\gt^+_{sph}$

We now follow Section 3 of \cite{BHLT22}. On the regular quotient,
$S_\bfn$, we have a moment map $\gz: S_\bfn  \rightarrow [-1,1]$.
A choice of $c\in (-1,1)$ creates a new Sasaki structure  (with Reeb vector field $\xi_c=f\,\xi_{\mathbf 1}$) on $M_\gw$ via the lift of $f=c\gz+1$ from $S_\bfn$ to $M_\gw$. From the discussion in the beginning of Section \ref{Exsect} we know that $c=\frac{k_1-k_2}{k_1+k_2}=x$ corresponds to the Reeb vector field of the $S^3_{\tilde{\bfw}}$ join, $M^5_{g,l,\tilde{\bfw}}=S^3_g\star_{l,1}S^3_{\tilde{\bfw}}$ from Section 3.2 of \cite{BoTo13} with $S^3_g$ being the Boothby-Wang constructed smooth Sasaki structure over $(\Sigma_g,[\omega_{\Sigma_g}])$,
$l=\gcd(k_1,k_2)$, and $(\tilde{w_1},\tilde{w_2})=(\frac{k_1}{l},\frac{k_2}{l})$. Since this Reeb vector field is extremal by construction, we know a priori  that 
the set of extremal Sasaki rays in the Sasaki cone, $\gt^+_{sph}$ is not empty.

Proposition 3.10 of \cite{BHLT22} tells us that the Reeb vector field determined - up to homothety - by $c\in (-1,1)$ (as explained in the beginning of Section \ref{Exsect}) is extremal (up to isotopy)
if and only if $F_c(\gz)>0$, for $-1<\gz <1$, where the polynomial
$F_c(\gz)$ is given as follows:

\noindent
Let $s=\frac{2(1-g)}{k_1-k_2}$,  $x=\frac{k_1-k_2}{k_1+k_2}$ and define
$$
\begin{array}{ccl}
\alpha_{r,-4} & = & \int_{-1}^1(ct+1)^{-4}t^r(1+xt)\,dt\\
\\
\alpha_{r,-5} & = & \int_{-1}^1(ct+1)^{-5}t^r(1+xt)\,dt\\
\\
\beta_{r,-3} & = & \int_{-1}^1(ct+1)^{-3}xst^r\,dt \\
\\
&+ & (-1)^r(1-c)^{-3}(1-x) +(1+c)^{-3}(1+x).
\end{array}
$$
Then, 
\begin{equation}\label{wextrpol5mnf}
F_c(\gz)=(c\gz+1)^3 \left[ \frac{2(1-x)}{(1-c)^3}(\gz+1) + \int_{-1}^\gz Q(t)(\gz-t)\,dt\right],
\end{equation}
where 
$$Q(t) = \frac{2xs}{(ct+1)^3} - \frac{(A_1 t+A_2)(1+xt)}{(ct+1)^5}$$
and $A_1$ and $A_2$ are the unique solutions to the linear system
\begin{equation}\label{wextrpol5mnf2}
\begin{array}{ccl}
\alpha_{1,-5}A_1+\alpha_{0,-5}A_2&=& 2\beta_{0,-3}\\
\\
\alpha_{2,-5}A_1+\alpha_{1,-5}A_2&=& 2\beta_{1,-3}.
\end{array}
\end{equation}
Further, if the positivity of $F_c(\gz)$ is satisfied, then the extremal Sasaki structure is CSC exactly when
\begin{equation}\label{scsc5mnf}
\alpha_{1,-4}\beta_{0,-3} - \alpha_{0,-4}\beta_{1,-3}=0
\end{equation}
is satisfied. The left hand side of \eqref{scsc5mnf} equals
$\frac{4h(c)}{3(1-c^2)^5}$, with polynomial \newline
$h(c)=x(sx-2) +(5+x^2-sx)c-x(6+s x)c^2-(1-sx-3x^2)c^3$ and $h(\pm 1)=\pm 4(1\mp x)^2$. Thus, since $h(c)$ is negative at $c=-1$ and positive at $c=1$,
 \eqref{scsc5mnf}  always has at least one solution $c\in (-1,1)$. 
  
 We calculate $F_c(\gz)$:
 $$F_c(\gz)=\frac{(k_1+k_2)^2(1-\gz^2)p(\gz)}{4((1 - c)^2 k_1^2 + (1 + c)^2 k_2^2 + 4 (1 - c^2) k_1 k_2)},$$
 where $p(\gz)$ is a polynomial of degree $2$ whose coefficients depend on $k_1,k_2,g$ and $c$, but is more conveniently written as
 $$
 \begin{array}{ccl}
 p(\gz)&=& c^2 s x+3 c^2 x^2-c^2-2 c s x^2+3 c x^3-7 c x+s x^3-4 x^2+6\\
 \\
 &+& 2 x \left(3 c^2 x^2-c^2-4 c x-x^2+3\right)\gz\\
 \\
 &+& (c-x) \left(-c s x+3 c x^2-c+s x^2-2 x\right)\gz^2,
 \end{array}
 $$
 where $s=\frac{2(1-g)}{k_1-k_2}$,  $x=\frac{k_1-k_2}{k_1+k_2}$. 
Clearly $F_c(\gz)>0$ for all $\gz \in (-1,1)$ exactly when  $p(\gz)>0$ for all $\gz \in (-1,1)$. We have arrived at

\begin{proposition}\label{fibjoinextre}
Consider the $d=1$ fiber join $S^3\lra M_\bfk \lra \grS_g$ over a Riemann surface $\grS_g$ of genus $g\geq1$ with its natural Sasakian structure $\cals_c$ as described above. Then $\cals_c$ is extremal (up to isotopy) if and only if $p(\gz)>0$ for all $\gz \in (-1,1)$.
\end{proposition}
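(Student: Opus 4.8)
The plan is to deduce Proposition \ref{fibjoinextre} directly from the machinery already assembled in the excerpt, so that essentially no new geometry is needed. The starting point is Proposition 3.10 of \cite{BHLT22}, quoted just above the proposition: the Sasaki structure $\cals_c$ attached to $c\in(-1,1)$ is extremal (up to isotopy) if and only if $F_c(\gz)>0$ for all $\gz\in(-1,1)$, where $F_c$ is the polynomial given in \eqref{wextrpol5mnf}. Thus the entire content of the proposition is the identity
$$F_c(\gz)=\frac{(k_1+k_2)^2(1-\gz^2)\,p(\gz)}{4\bigl((1-c)^2k_1^2+(1+c)^2k_2^2+4(1-c^2)k_1k_2\bigr)},$$
together with the observation that the scalar prefactor is strictly positive. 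So the proof reduces to two bookkeeping tasks: (i) verify that the denominator $(1-c)^2k_1^2+(1+c)^2k_2^2+4(1-c^2)k_1k_2$ is positive for $c\in(-1,1)$ and $k_1>k_2>0$; and (ii) carry out the explicit computation of $F_c(\gz)$ in the colinear setting $N=\Sigma_g$, using the values $s=\frac{2(1-g)}{k_1-k_2}$ and $x=\frac{k_1-k_2}{k_1+k_2}$, to obtain the displayed factorization with the quadratic $p(\gz)$.

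For step (i), I would simply note that $(1-c)^2k_1^2+(1+c)^2k_2^2\ge 2(1-c)(1+c)k_1k_2=2(1-c^2)k_1k_2$ by AM--GM (or by expanding $((1-c)k_1-(1+c)k_2)^2\ge 0$), so the bracket is at least $6(1-c^2)k_1k_2>0$ on $(-1,1)$; in fact it is manifestly a sum of a non-negative term and the strictly positive term $4(1-c^2)k_1k_2$, which already suffices. Since $(k_1+k_2)^2>0$ as well, the sign of $F_c(\gz)$ on $(-1,1)$ is exactly the sign of $(1-\gz^2)p(\gz)$, and because $1-\gz^2>0$ throughout the open interval, $F_c(\gz)>0$ for all $\gz\in(-1,1)$ if and only if $p(\gz)>0$ for all $\gz\in(-1,1)$. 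Combining with Proposition 3.10 of \cite{BHLT22} gives the stated equivalence.

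For step (ii) — the one genuinely laborious point — I would substitute the colinear data into the formulas preceding the proposition. Concretely: compute the constants $\alpha_{r,-4},\alpha_{r,-5},\beta_{r,-3}$ as explicit rational functions of $c$ and $x$ by evaluating the elementary integrals $\int_{-1}^1(ct+1)^{-k}t^r(1+xt)\,dt$; solve the $2\times 2$ linear system \eqref{wextrpol5mnf2} for $A_1,A_2$; insert these into $Q(t)$; and then evaluate $\frac{2(1-x)}{(1-c)^3}(\gz+1)+\int_{-1}^\gz Q(t)(\gz-t)\,dt$, multiplying through by $(c\gz+1)^3$. One must check that the resulting degree-four polynomial in $\gz$ (after clearing the $(c\gz+1)^{-k}$ denominators inside the integral — note the cancellation of the factor $(c\gz+1)^3$ against the bracket) factors as $(1-\gz^2)$ times the displayed quadratic $p(\gz)$, with leading coefficient $(c-x)(-csx+3cx^2-c+sx^2-2x)$ and the stated linear and constant coefficients, all divided by the common scalar $\frac{4((1-c)^2k_1^2+(1+c)^2k_2^2+4(1-c^2)k_1k_2)}{(k_1+k_2)^2}$. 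This is a purely mechanical (if tedious) symbolic computation, best done with a computer algebra system; the main obstacle is bookkeeping, i.e.\ getting all the $c,x,s$ dependence right and confirming the vanishing of $F_c$ at $\gz=\pm1$ (which is forced since $D_1,D_2$ are smooth divisors in the regular quotient, giving a built-in $(1-\gz^2)$ factor in the Hermitian/extremal polynomial of an admissible metric). Once the factorization is verified, the proposition follows immediately. I expect no conceptual difficulty beyond this computation, so I would present it by stating the values of the $\alpha$'s and $\beta$'s, the solution $(A_1,A_2)$, and then simply assert the resulting identity for $F_c(\gz)$, leaving the final polynomial expansion to the reader or to a symbolic check.
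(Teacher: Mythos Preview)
Your proposal is correct and matches the paper's own argument essentially line for line: the paper derives the factorization of $F_c(\gz)$ displayed just before the proposition and then states ``We have arrived at'' Proposition~\ref{fibjoinextre}, so the proof is precisely the reduction via Proposition~3.10 of \cite{BHLT22} together with the explicit computation and the positivity of the scalar prefactor. Your additional remark that $(1-c)^2k_1^2+(1+c)^2k_2^2+4(1-c^2)k_1k_2>0$ on $(-1,1)$ is a small clarification the paper leaves implicit.
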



Note that \newline
$p(-1)=\frac{8k_2((1 - c)^2 k_1^2 + (1 + c)^2 k_2^2 + 4 (1 - c^2) k_1 k_2)}{(k_1+k_2)^3}$ and
 $p(1)=\frac{8k_1((1 - c)^2 k_1^2 + (1 + c)^2 k_2^2 + 4 (1 - c^2) k_1 k_2)}{(k_1+k_2)^3}>0$, thus $p(\pm 1)>1$, so we see right away that when $c=x$, $p(\gz)$ (which is now of degree one) is positive for $-1<\gz <1$. This confirms our expectation from above that $\xi_c$ is extremal when $c= \frac{k_1-k_2}{k_1+k_2}=x$.
 
It is easy to check that for $g>\frac{31 k_1^2+14 k_1 k_2+k_1+k_2^2+k_2}{k_1+k_2}$ and $c=\frac{k_1}{k_1+k_2}$, $p(0)<0$. Thus we see that for any fixed choice of integers $k_1>k_2>0$, $\gt^+_{sph}$ is not exhausted by extremal rays when $g$ is very large. This is expected in light of Theorem 5.1 in \cite{BoTo13}. 

From \cite{BoTo13} we have the following results:
\begin{enumerate}
\item (Proposition 5.5 in \cite{BoTo13} combined with Theorem 3 in \cite{ApCaLe21}) There is a unique ray in $\gt^+_{sph}$ with a CSC Sasaki metric (up to isotopy).
\item (Proposition 5.10 in \cite{BoTo13}) If $g\leq 1+3k_2$ then every ray in $\gt^+_{sph}$ has an extremal Sasaki metric (up to isotopy).
In particular, this is true whenever $g\leq 4$.
\end{enumerate}

Statement (1) means that \eqref{scsc5mnf} has a unique solution $c\in (-1,1)$ (i.e. 
the cubic $h(c)$ above has a unique real root $c\in (-1,1)$) and for this unique solution,
$p(\gz)>0$ for all $\gz \in (-1,1)$. An easy way to see the uniqueness of the real root directly from the present setup is to
make a change of variable\footnote{Note that $b$ is exactly what $c$ is in (51) of \cite{BoTo13}. This follows from
    our discussion in Section \ref{Yamsect}.}
 $c=\phi(b)=\frac{1-b}{1+b}$ [$\phi: (0,+\infty) \rightarrow (-1,1)$]
Then $h(c)$ transforms to $\tilde{h}(b)$, where
$$\tilde{h}(b)= \frac{4}{(b+1)^3}\left( (1-x)^2+(1-x)(2+2x-sx)b -(1+x)(2(1-x)-sx)b^2-(1+x)^2b^3\right).$$
Since the polynomial coefficients of the cubic 
$$(1-x)^2+(1-x)(2+2x-sx)b -(1+x)(2(1-x)-sx)b^2-(1+x)^2b^3$$ 
change sign exactly once (recall $sx\leq 0$ and $0<x<1$), we have (using Descartes' rule of signs) exactly one positive
root $b\in (0,+\infty)$ (corresponding to a unique root $c\in(-1,1)$ of $h(c)$).
Then too see that this $c$ value (let us call it $\hat{c}$) satisfies that $p(\gz)>0$ for all $\gz \in (-1,1)$
we can first observe that since $h(x)=3x(1-x^2)^2\neq 0$, $\hat{c}\neq x$. With that settled we may (solve for $s$ in $h(\hat{c})=0$ and) write
$s=\frac{3 \hat{c}^3 x^2-\hat{c}^3-6 \hat{c}^2 x+\hat{c} x^2+5 \hat{c}-2 x}{(1-\hat{c}^2) x (\hat{c}-x)}$.
Substituting this into $p(\gz)$ (and using that $x=\frac{k_1-k_2}{k_1+k_2}$) gives us
$p(\gz)=\frac{4 ((1 - \hat{c})^2 k_1^2 + (1 + \hat{c})^2 k_2^2 + 4 (1 - \hat{c}^2) k_1 k_2)(1 + \hat{c} \gz) (1 - \hat{c} x -\hat{c}\gz+x\gz)}{(1- \hat{c}^2)(k_1+k_2)^2}$. Since $0<x<1$ and $-1<\hat{c}<1$, it easily follows that $p(\gz)>0$ for $-1<\gz <1$.

Similarly, statement (2) is (re)verified if we show that
for $g\leq 1+3k_2$, $p(\gz)>0$ for all $c,\gz \in (-1,1)$. This is done easily by writing $p(\gz)$ in a new variable $y$:
$\gz=\psi(y)=\frac{1-y}{1+y}$ ($0<y<+\infty$) along with using the above transformation $c=\phi(b)=\frac{1-b}{1+b}$.
After multiplying by $(1+b)^2(1+y)^2$, this results in a
polynomial in the two variables $b,y>0$. The coefficients of this polynomial are all non-negative (with some strictly positive) precisely when
$g\leq 1+3k_i$ for $i=1,2$. Since (we assumed without loss of generality that) $k_1>k_2$, this is manifested by $g\leq 1+3k_2$.

\begin{example}
Assume now that $k_2=1$ and $g=5$ or $g=6$. Thus $g\leq 1+3k_2$ is false and Statement (2) cannot be applied.
Nevertheless we shall see that 
positivity of $p(\gz)$ for $-1<\gz<1$ still holds for all $k_1>1$: 
With $g=5$, $k_2=1$, $c=\phi(b)=\frac{1-b}{1+b}$, and $\gz=\psi(y)=\frac{1-y}{1+y}$, $p(\gz)$ rewrites to
$$\frac{32 \left(b^2 k_1^2(k_1-y+y^2)+3 b k_1^2 y+4 b k_1^2+4 b k_1 y^2+11 b k_1 y+(3 k_1-4) y+k_1+y^2\right)}{(b+1)^2 (k_1+1)^3 (y+1)^2}.$$
Since $k_1\geq 2$ and $y,b>0$, it is easy to see that this is always positive.

With $g=6$, $k_2=1$, $c=\phi(b)=\frac{1-b}{1+b}$, and $\gz=\psi(y)=\frac{1-y}{1+y}$, $p(\gz)$ rewrites to
$$\frac{32 \left(b^2 k_1^2(k_1-2y+y^2)+3 b k_1^2 y+4 b k_1^2+4 b k_1 y^2+13 b k_1 y+(3 k_1-5) y+k_1+y^2\right)}{(b+1)^2 (k_1+1)^3 (y+1)^2}$$
Since $k_1\geq 2$ and $y,b>0$, we see also in this case that this is always positive.

On the other hand, for $k_2\geq 2$, then $1+3k_2\geq 7$ and since $7$ is larger than both $5$ and $6$, we already know from Statement (2) above that positivity of $p(\gz)$ for $-1<\gz<1$ holds.
In conclusion, when $g\leq6$ we have that for all integers $k_1>k_2>0$, every ray in $\gt^+_{sph}$ has an extremal Sasaki metric (up to isotopy).
This improves the result we had in \cite{BoTo13}.

Finally notice that when $g=7$, $k_1=2$, $k_2=1$ and $c=-\frac{299}{301}$, we get that $p(-\frac{1}{5})=-\frac{7794656}{61155675}<0$ and so
positivity of $p(\gz)$ fails in this case.
\end{example}

The case $k_1=k_2$ and $g\leq 6$ was already handled in Example 5.11 of \cite{BoTo13} (recall that $(k_1,k_2)=l(\tilde{w_1},\tilde{w_2})$ in the $S^3_{\tilde{\bfw}}$ join, $M^5_{g,l,\tilde{\bfw}}=S^3_g\star_{l,1}S^3_{\tilde{\bfw}}$). Similarly to the example above we had that every ray in $\gt^+_{sph}$ has an extremal Sasaki metric (up to isotopy). We can thus state the following result.

\begin{proposition}\label{Mkprop}
Let $\bfk=(k_1,k_2)$ with $k_1\geq k_2> 0$ being integers and consider the Yamazaki fiber join $M_\bfk$ as described above.
For $1\leq g \leq 6$ or $1\leq g \leq 1+3k_2$ we have that the entire Sasaki cone is extremal (up to isotopy).
\end{proposition}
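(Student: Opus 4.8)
The plan is to reduce the statement to the positivity criterion already isolated above and then assemble it from ingredients in the preceding discussion. Recall that for $g\geq 1$ the Sasaki cone of $M_\bfk$ coincides with the two-dimensional cone $\gt^+_{sph}$, whose rays are parametrized up to homothety by $c\in(-1,1)$ via the $CR$-twist, and that by Proposition \ref{fibjoinextre} the Sasaki structure $\cals_c$ is extremal (up to isotopy) precisely when the degree-two polynomial $p(\gz)$ written above is positive for all $\gz\in(-1,1)$. Hence the claim amounts to showing that, under either hypothesis on $g$, one has $p(\gz)>0$ for every $\gz\in(-1,1)$ and every $c\in(-1,1)$.

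The second hypothesis $1\leq g\leq 1+3k_2$ is precisely Statement (2) above, i.e.\ Proposition 5.10 of \cite{BoTo13}, applied to the colinear $S^3_{\tilde{\bfw}}$-join presentation $M^5_{g,l,\tilde{\bfw}}=S^3_g\star_{l,1}S^3_{\tilde{\bfw}}$ with $l=\gcd(k_1,k_2)$ and $\tilde{\bfw}=(k_1/l,k_2/l)$; when $k_1>k_2$ it can also be re-derived directly as above, via the substitutions $c=\phi(b)=\tfrac{1-b}{1+b}$ and $\gz=\psi(y)=\tfrac{1-y}{1+y}$, $b,y\in(0,\infty)$: after clearing $(1+b)^2(1+y)^2$ the expression $p$ becomes a polynomial in $b,y$ all of whose coefficients are nonnegative (with at least one strictly positive) exactly when $g\leq 1+3k_i$ for both $i=1,2$, which for $k_1>k_2$ is the single inequality $g\leq 1+3k_2$.

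For the first hypothesis $1\leq g\leq 6$ I would split on whether $k_1=k_2$. If $k_1=k_2$ the regular quotient degenerates to the product $\bbp(\BOne\oplus\calo(0))=\Sigma_g\times\bbc\bbp^1$, and the claim for $g\leq 6$ is Example 5.11 of \cite{BoTo13}. If $k_1>k_2$ and $g\leq 1+3k_2$, use the previous paragraph. The only remaining pairs have $k_1>k_2$, $g>1+3k_2$, and $g\leq 6$; then $k_2<\tfrac{g-1}{3}\leq\tfrac53$ forces $k_2=1$, so $g>4$ leaves $g\in\{5,6\}$, and $k_1=k_2=1$ being excluded gives $k_1\geq 2$. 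For $g=5$ and $g=6$ I would quote the explicit rewritings of $p(\gz)$ recorded in the Example above: under the same substitutions the numerator of $p$ is $32\,(b^2k_1^2(k_1-y+y^2)+\cdots)$ for $g=5$ and $32\,(b^2k_1^2(k_1-2y+y^2)+\cdots)$ for $g=6$, each manifestly positive for $b,y>0$ and $k_1\geq 2$ (indeed $k_1-y+y^2\geq k_1-\tfrac14>0$, $k_1-2y+y^2=(k_1-1)+(y-1)^2\geq 1>0$, and the remaining coefficients $3k_1-4$ and $3k_1-5$ are positive). In every case $p(\gz)>0$ on $(-1,1)$ for all $c$, so every ray of $\gt^+_{sph}$, which is the entire Sasaki cone, carries an extremal Sasaki metric up to isotopy.

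The only genuinely substantive input is the positivity of $p$, and this is driven entirely by the rational substitutions $c=\tfrac{1-b}{1+b}$, $\gz=\tfrac{1-y}{1+y}$, which turn a rational function of $(c,\gz)$ into a polynomial in $(b,y)$ whose coefficient signs are transparent; once that sign pattern is in hand --- at the threshold $g=1+3k_i$ and, ad hoc, for the two leftover cases $g=5,6$ with $k_2=1$ --- what remains is bookkeeping over the small range $1\leq g\leq 6$, $k_1\geq k_2\geq 1$. I do not anticipate a real obstacle beyond checking carefully that the pairs not subsumed by $g\leq 1+3k_2$ or by the product case are exactly $(g,k_2)\in\{(5,1),(6,1)\}$ with $k_1\geq 2$, and that the $k_1=k_2$ instances of the second hypothesis are indeed covered by the $\tilde{\bfw}=(1,1)$ specialization of Proposition 5.10 of \cite{BoTo13}.
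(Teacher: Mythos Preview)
Your proposal is correct and follows essentially the same approach as the paper: both reduce to the positivity criterion of Proposition~\ref{fibjoinextre}, invoke Proposition~5.10 of \cite{BoTo13} (re-derived via the substitutions $c=\tfrac{1-b}{1+b}$, $\gz=\tfrac{1-y}{1+y}$) for $g\leq 1+3k_2$, defer the $k_1=k_2$ case to Example~5.11 of \cite{BoTo13}, and handle the residual pairs $(g,k_2)\in\{(5,1),(6,1)\}$ with $k_1\geq 2$ by the explicit rewritings in the Example. Your write-up is in fact slightly more explicit than the paper's in justifying why those residual numerators are positive (e.g.\ $k_1-2y+y^2=(k_1-1)+(y-1)^2$), and in isolating exactly which $(g,k_1,k_2)$ are not already subsumed by $g\leq 1+3k_2$.
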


\subsection{$N=\bbc\bbp^1\times \bbc\bbp^1$}\label{NNsect}
Let $\grO_i$ denote the standard area forms on the ith copy of $\bbc\bbp^1$. With slight abuse of notation, we denote the pull-back of their K\"ahler classes to $H^2(N,\bbz)$ by $[\grO_1]$ and $[\grO_2]$.
The K\"ahler cone of $N$ then equals $span_{\bbr^+}\{[\grO_1],[\grO_2]\}$. Let $M_\gw$ be a $d=1$ Yamazaki fiber join formed from a choice of K\"ahler classes which are represented by K\"ahler forms 
\begin{equation}\label{kmatrixeqn}
\gro_j=k^1_j\grO_1 +k^2_j\grO_2, \qquad k^1_j,k^2_j\in \bbz^+,
\end{equation}
for $j=1,2$. The line bundles $L_1, L_2$ satisfy that $c_1(L_j)=[\gro_j] = k^1_j[\grO_1] +k^2_j[\grO_2]$.
So the choices of K\"ahler forms is given by the $2$ by $2$ matrix
\begin{equation}\label{Kmatrix}
K=
\begin{pmatrix} 
k^1_1 & k^2_1 \\
 \\
k^1_{2} & k^2_{2},
\end{pmatrix}
\end{equation}
and the fiber join is non-colinear exactly when $det\,K\neq0$.
Now the quotient complex manifold of $M_\gw$  arising from the regular Sasakian structure with Reeb vector field $\xi_{\mathbf 1}$ is equal to 
the following  $\bbc\bbp^1$ bundle over $\bbc\bbp^1\times\bbc\bbp^1$:
\begin{equation}\label{KSprojeqn} \notag
\bbp\bigl(L_1^*\oplus L_2^*) = \bbp\bigl({\BOne} \oplus L_1\otimes L_2^*\bigr)  
= \bbp\bigl({\BOne} \oplus \calo(k^1_1-k^1_2,k^2_1-k^2_2)\bigr) \rightarrow \bbc\bbp^1\times\bbc\bbp^1. 
\end{equation}
We assume here that $k^i_1\neq k^i_2$ for $i=1,2$. If we don't make this assumption our regular quotient could be a product of $\bbc\bbp^1$ with a Hirzebruch surface. This is not a problem per se, but needs to be treated slightly differently, so we will avoid this here.

Every K\"ahler class on $\bbp\bigl({\BOne} \oplus L_1\otimes L_2^*\bigr)$ is admissible in the broader sense of the definition given in \cite{ACGT08}. Thus the fiber join is super admissible and therefore strongly admissible. This case is hence
a special case of Theorem \ref{2ndroundexistence}, with  $\gt^+_{sph}$ a proper subcone of the (unreduced) Sasaki cone. Nevertheless we shall study this example in details since it will illustrate two different approaches for locating
CSC ray(s) in $\gt^+_{sph}$. At the end of the section we will also discuss which polarized K\"ahler manifolds $(S_\bfn,[\omega])$ of the form $S_\bfn=\bbp\bigl({\BOne} \oplus \calo(n_1,n_2)\bigr) \rightarrow \bbc\bbp^1\times\bbc\bbp^1$
appear as regular quotients of a Sasaki Yamazaki fiber join.

For $n_1,n_2 \in \bbz\setminus \{0\}$, a K\"ahler class on the complex manifold $S_\bfn = \bbp\bigl({\BOne} \oplus \calo(n_1,n_2)\bigr) \rightarrow \bbc\bbp^1\times\bbc\bbp^1$ is, up to scale, of the form
$2\pi(\frac{n_1}{x_1}[\grO_1] +\frac{n_2}{x_2}[\grO_2]) + \Xi)$, where $0<|x_i|<1$ and $x_in_i>0$.
As we saw in Section 5.3.3 of \cite{BoTo20b}, as well as in Section \ref{qrquotients} of the current paper, here we can calculate the quotient K\"ahler class up to scale and all in all we get 
a smooth admissible K\"ahler manifold with admissible data 
\begin{equation}\label{regadmdata}
n_1=k^1_1-k^1_2,\quad n_2=k^2_1-k^2_2, \quad x_1=\frac{k^1_1-k^1_2}{k^1_1+k^1_2}, \quad x_2=\frac{k^2_1-k^2_2}{k^2_1+k^2_2}.
\end{equation}
Indeed, more generally, using Proposition \ref{d=1quasiregular} we have that for co-prime $\bfw = (w_1,w_2) \in (\bbz^+)^2$ the quasi-regular quotient of $M_\gw$ with respect to $\xi_\bfw$ is
the log pair \newline
$B_{\gw, \bfw}:=(\bbp({\BOne} \oplus \calo(w_2k^1_1-w_1k^1_2,w_2 k^2_1-w_1 k^2_2)),\Delta_\bfw)$.
Together with the quotient K\"ahler class (up to scale, also from Proposition \ref{d=1quasiregular}) this gives (assuming $w_2k^i_1-w_1k^i_2\neq 0$) admissible data
\begin{equation}\label{qregadmdata}
n_1=w_2k^1_1-w_1k^1_2,\quad n_2=w_2 k^2_1-w_1 k^2_2, \quad x_1=\frac{w_2 k^1_1- w_1 k^1_2}{w_2 k^1_1+w_1k^1_2}, \quad x_2=\frac{w_2k^2_1-w_1k^2_2}{w_2k^2_1+w_1k^2_2}.
\end{equation}
Note that if $w_2k^i_1-w_1k^i_2=0$ for one of (or both) $i=1,2$, we get a product of $\bbc\bbp^1$ with a so-called Hirzebruch orbifold.

From the discussion above we can see  the rays, given up to scale by  $\xi_\bfw$, as $CR$-twists of the regular quotient \cite{ApCa18}. So choosing $c=\frac{w_1-w_2}{w_1+w_2}$ creates a new Sasaki structure via the lift of $f=c\gz+1$ from $S_\bfn$ to $M_\gw$.
With this correspondence in mind we can take two different approaches when seeking out rays in $\gt^+_{sph}$ with constant scalar curvature. From the $CR$-twist point of view, the Reeb vector field $\xi_c$ given by the $CR$-twist has a constant scalar curvature Sasaki metric (up to isotopy) exactly when Equation (10) from \cite{BHLT22} holds. Applying this equation to the regular quotient with admissible data from \eqref{regadmdata} yields the equation $f_{CR}(c)=0$, where

\begin{equation}\label{cscEQinc}
\begin{split}
f_{CR}(c) &  := 18 c \left(c^2-1\right)^2 {k^1_1} {k^2_1} {k^1_2} {k^2_2}+3 (c-1)^5 (k^1_1)^2 (k^2_1)^2+3 (c+1)^5 (k^1_2)^2 (k^2_2)^2\\
&+(c+1) (c-1)^4 k^1_1k^2_1\left(k^1_1 + k^2_1-3   k^2_1{k^1_2}-3  k^1_1  {k^2_2}\right)\\
&+(c+1)^2 (c-1)^3 \left( (k^2_1)^2 {k^1_2} +(k^1_1)^2 {k^2_2}-4  {k^1_1} {k^2_1} {k^1_2}-4 {k^1_1} {k^2_1} {k^2_2}  \right)   \\
&+(c+1)^3 (c-1)^2\left( {k^1_1} (k^2_2)^2  + {k^2_1} (k^1_2)^2-4 {k^2_1} {k^1_2} {k^2_2}-4  {k^1_1} {k^1_2} {k^2_2} \right) \\
&+(c+1)^4 (c-1)k^1_2k^2_2\left( k^1_2 +  k^2_2-3  {k^1_1} k^2_2 -3  {k^2_1} k^1_2  \right)\\
  \end{split}
\end{equation}

If $c\in \bbq\cap (-1,1)$, we can then set $c=\frac{w_1-w_2}{w_1+w_2} $ to get an equation in $(w_1,w_2)\in \bbz^+\times \bbz^+$ for $\xi_\bfw$ being CSC (up to isotopy):
\begin{equation}\label{cscfinal}
\begin{split}
0&= -3 (k^1_2)^2 (k^2_2)^2 w_1^5 \\
&+ k^1_2 k^2_2 \left(k^1_2 + k^2_2 - 3 k^2_1 k^1_2 - 3 k^1_1 k^2_2\right)w_1^4 w_2 \\
&+\left(4 k^1_1 k^1_2 k^2_2 + 4 k^2_1 k^1_2 k^2_2   - 9 k^1_1 k^2_1 k^1_2 k^2_2  -k^2_1 (k^1_2)^2 -k^1_1 (k^2_2)^2\right) w_1^3 w_2^2\\
&+\left(9 k^1_1 k^2_1 k^1_2 k^2_2   + (k^2_1)^2 k^1_2 
 + (k^1_1)^2 k^2_2  - 4 k^1_1 k^2_1 k^2_2- 4 k^1_1 k^2_1 k^1_2
 \right)w_1^2 w_2^3\\
&+ k^1_1 k^2_1 \left(3 k^2_1 k^1_2 + 3 k^1_1 k^2_2-k^1_1 - k^2_1\right)w_1 w_2^4 \\
&+ 3(k^1_1)^2 (k^2_1)^2 w_2^5.
\end{split}
  \end{equation}

On the other hand, Proposition 4.13 of \cite{BoTo21} 
(with $m_0=w_1$, $m_\infty=w_2$, $r_1=x_1$, and $r_2=x_2$) tells us that the K\"ahler class given by $(x_1,x_2)$ on the log pair $(S_\bfn,\Delta_\bfw)$ has a
constant scalar curvature K\"ahler metric when the following equation holds true:
\begin{equation}\label{cscEQinw}
\begin{split}
0= & 9 (w_1 - w_2) n_1 n_2 - 6 (w_1 + w_2) n_1 n_2 (x_1 + x_2) + 6 (w_1 - w_2) n_1 n_2 x_1x_2\\
&+ 3n_2 (4 w_1 w_2 - n_1 (w_1 - w_2)) x_1^2 +
 3n_1 (4 w_1 w_2 - n_2 (w_1 - w_2))  x_2^2 \\
 & - (4 w_1 w_2 (n_1 + n_2) - 
    3 (w_1 - w_2) n_1 n_2) x_1^2 x_2^2.
    \end{split}
\end{equation}
We can then use the data in \eqref{qregadmdata} above to get an equation for the existence of a constant scalar curvature K\"ahler metric in the K\"ahler class of the 
quasi-regular K\"ahler quotient of $\xi_\bfw$. 

As expected from the above discussion and the fact that a quasi-regular Sasaki structure has constant scalar curvature (up to isotopy) exactly when its 
K\"ahler quotient has a constant scalar curvature K\"ahler metric in its K\"ahler class, this gives an equation equivalent to \eqref{cscfinal}. 

Consider a given complex manifold $S_\bfn = \bbp\bigl({\BOne} \oplus \calo(n_1,n_2)\bigr) \rightarrow \bbc\bbp^1\times\bbc\bbp^1$. This will be the regular quotient of a $d=1$ Yamazaki fiber join given by
$K$ for any matrix $K$ of the form
\begin{equation}
K=
\begin{pmatrix} 
n_1+k^1 & n_2+k^2 \\
 \\
k^1 & k^2,
\end{pmatrix}
\end{equation}
where $k^i \in \bbz$ such that $k^i>Max\{0,-n_i\}$.
For a given choice of $k^1,k^2$, the quotient K\"ahler class is then determined, up to scale, by $x_1=\frac{n_1}{n_1+2k^1}$ and $x_2=\frac{n_2}{n_2+2k^2}$.
This gives a criterion for which K\"ahler classes on $S_\bfn$ can show up as regular quotient K\"ahler classes of a $d=1$ Yamazaki fiber join.

For example, if $n_1=1$ and $n_2=-1$, we have $x_1=\frac{1}{1+2k^1}$ and $x_2=\frac{-1}{-1+2k^2}$. Here $k^1 \in \bbz^+$ and $k^2 \in \bbz^+\setminus\{1\}$.
The Koiso-Sakane KE class is given by $x_1=1/2$ and $x_2=-1/2$ and we see right away that this class is out of range. The other CSC classes on this manifold are
given by $x_2=-x_1$ and $x_2=x_1-1$ (see e.g. Theorem 9 in \cite{ACGT08}).
Now,
$$
\begin{array}{ccl}
x_2 & = &-x_1\\
\\
&\iff&
\\
\frac{-1}{-1+2k^2} & = & -\frac{1}{1+2k^1}\\
\\
&\iff&
\\
k^2&=& k^1+1,
\end{array}
$$
which then gives us a one parameter family $(x_1,x_2) = (\frac{1}{1+2k^1},\frac{-1}{1+2k^1})$ , $k^1\in \bbz^+$ of
CSC K\"ahler classes that each are regular quotient K\"ahler classes of a $d=1$ Yamazaki fiber join.

One the other hand,
$$
\begin{array}{ccl}
x_2 & = &x_1-1\\
\\
&\iff&
\\
\frac{-1}{-1+2k^2} & = & \frac{-2k^1}{1+2k^1}\\
\\
&\iff&
\\
1&=& -4k^1-4k^1 k^2,
\end{array}
$$
which has no solutions for $k^1 \in \bbz^+$ and $k^2 \in \bbz^+\setminus\{1\}$. Thus, none of the CSC K\"ahler classes from this family can be regular quotient K\"ahler classes of a $d=1$ Yamazaki fiber join.

\subsection{$N= \Sigma_{g_1}\times\Sigma_{g_2}$, a product of Riemann surfaces}\label{highergenusprod}
We can generalize the example of Section \ref{NNsect} to consider the case where $N=\Sigma_{g_1}\times\Sigma_{g_2}$ with $\Sigma_{g_i}$ each being compact Riemann surfaces of genus $g_i$,
equipped with a standard CSC area form $\grO_i$. Similarly to Section \ref{NNsect}, each choice of matrix 
$K=
\begin{pmatrix} 
k^1_1 & k^2_1 \\
 \\
k^1_{2} & k^2_{2}.
\end{pmatrix}
$,
consisting of positive integer entries $k^i_j$, yields a $d=1$ Yamazaki fiber join $M_\gw=S(L_1^*\oplus L_2^*)$ via the line bundles 
$L_1, L_2$ satisfying $c_1(L_j)=[\gro_j] = k^1_j[\grO_1] +k^2_j[\grO_2]$. We assume here that $k^i_1\neq k^i_2$ for $i=1,2$.

The case that $M$ is the total space of a Sasakian fiber join with $N=\grS_{g_1}\times\grS_{g_2}$ was treated in Proposition 5.8 of \cite{BoTo20b}. When $d>1$ the spectral sequence of the fibration collapses, so the cohomology groups of $M$ are the cohomology groups of the product $S^{2d+1}\times \grS_{g_1}\times\grS_{g_2}$. When $d=1$ we have
\begin{equation}\label{HM}
H^p(M^7,\bbz)= \begin{cases} \bbz~&\text{if $p=0,7$} \\
                               \bbz^{2g_1+2g_2} ~&\text{if $p=1,3,6$} \\
                              \bbz^{4g_1g_2+2}~ &\text{if $p=2,5$} \\
                              \bbz^{2g_1+2g_2} +\bbz_e~ &\text{if $p=4$} \\
                              0 &\text{otherwise}
                               \end{cases} 
\end{equation}
where the image of the differential $d_4$ in $E^{4,0}_2$ is the Euler class of the bundle with $e=k^1_1k^2_2+k^2_1k^1_2$. In both cases with $g_1,g_2$ and $e$ fixed we know that $H^4(N,\bbz)=\bbz$, so it follows from a theorem of Pontrjagin \cite{Pon45} (see also \cite{Mas58,DoWh59}) that the sphere bundles $M$ are classified by their 2nd and 4th Stiefel-Whitney classes $w_2,w_4$, and their Pontrjagin class $p_1(M)$.

Similarly to Section \ref{NNsect}, the quotient complex manifold of $M_\gw$  arising from the regular Sasakian structure with Reeb vector field $\xi_{\mathbf 1}$ is equal to 
the following  $\bbc\bbp^1$ bundle over $\Sigma_{g_1}\times\Sigma_{g_2}$:
$$
\bbp\bigl(L_1^*\oplus L_2^*) = \bbp\bigl({\BOne} \oplus L_1\otimes L_2^*\bigr)  
= \bbp\bigl({\BOne} \oplus \calo(n_1,n_2)\bigr) \rightarrow \Sigma_{g_1}\times\Sigma_{g_2}, 
$$
with $n_1=k^1_1-k^1_2$ and $n_2=k^2_1-k^2_2$.
Further, the regular quotient K\"ahler class is, up to scale, equal to the admissible
K\"ahler class  $2\pi(\frac{n_1}{x_1}[\grO_1] +\frac{n_2}{x_2}[\grO_2]) + \Xi)$ where
$x_1=\frac{k^1_1-k^1_2}{k^1_1+k^1_2}, \quad x_2=\frac{k^2_1-k^2_2}{k^2_1+k^2_2}$.

When $g_i\geq 2$ for at least one of $i=1,2$, we cannot use Theorem \ref{2ndroundexistence} to get existence of extremal/CSC Sasaki metrics. Further, we know from the examples in Sections 3.3 and 3.4 of \cite{BHLT22} that the existence of CSC or even just extremal Sasaki metrics is by no means a given. More specifically, Proposition 3.10 of \cite{BHLT22} tells us that the Reeb vector field determined - up to homothety - by $c\in (-1,1)$ (as explained in the beginning of the section) is extremal (up to isotopy)
if and only if $F_c(\gz)>0$, for $-1<\gz <1$, where the polynomial
$F_c(\gz)$ is given as follows:

\noindent
Let $s_i=\frac{2(1-g_i)}{n_i}=\frac{2(1-g_i)}{k^i_1-k^i_2}$,  $x_i=\frac{k^i_1-k^i_2}{k^i_1+k^i_2}$, and define
$$
\begin{array}{ccl}
\alpha_{r,-5} & = & \int_{-1}^1(ct+1)^{-5}t^r(1+x_1t)(1+x_2t)\,dt\\
\\
\alpha_{r,-6} & = & \int_{-1}^1(ct+1)^{-6}t^r(1+x_1t)(1+x_2t)\,dt\\
\\
\beta_{r,-4} & = & \int_{-1}^1(ct+1)^{-4}t^r(x_1s_1(1+x_2 t)+x_2 s_2(1+x_1 t))\,dt \\
\\
&+ & (-1)^r(1-c)^{-4}(1-x_1)(1-x_2) +(1+c)^{-4}(1+x_1)(1+x_2).
\end{array}
$$
Then, 
\begin{equation}\label{wextrpol7mnf}
F_c(\gz)=(c\gz+1)^4 \left[ \frac{2(1-x_1)(1-x_2)}{(1-c)^4}(\gz+1) + \int_{-1}^\gz Q(t)(\gz-t)\,dt\right],
\end{equation}
where 
$$Q(t) = \frac{2\left( x_1s_1(1+x_2t)+x_2s_2(1+x_1t)\right)}{(ct+1)^4} - \frac{(A_1 t+A_2)(1+x_1t)(1+x_2t)}{(ct+1)^6}$$
and $A_1$ and $A_2$ are the unique solutions to the linear system
\begin{equation}\label{wextrpol7mnf2}
\begin{array}{ccl}
\alpha_{1,-6}A_1+\alpha_{0,-6}A_2&=& 2\beta_{0,-4}\\
\\
\alpha_{2,-6}A_1+\alpha_{1,-6}A_2&=& 2\beta_{1,-4}.
\end{array}
\end{equation}
Further, if the positivity of $F_c(\gz)$ is satisfied, then the extremal Sasaki structure is CSC exactly when
\begin{equation}\label{scsc7mnf}
\alpha_{1,-5}\beta_{0,-4} - \alpha_{0,-5}\beta_{1,-4}=0
\end{equation}
is satisfied. A direct calculation shows that
$\alpha_{1,-5}\beta_{0,-4} - \alpha_{0,-5}\beta_{1,-4}=\frac{4h(c)}{9(1-c^2)^7}$ where $h(c)$ is the polynomial given by
\small
$$
\begin{aligned}
h(c)&=(3 x_1 x_2( s_1 x_2 + s_2x_1)- s_1 x_1- s_2 x_2+3(3  x_1^2 x_2^2-  x_1^2+2  x_1 x_2-  x_2^2+1)) c^5\\
&+(s_1 x_1^2 +s_2 x_2^2- 3  (s_1+s_2) x_1^2 x_2^2- 4  (s_1+s_2) x_1 x_2-6 (x_1+x_2)(4x_1 x_2+1))c^4 \\
&+4(((s_1x_1+s_2x_2)-  (s_1x_2+s_2x_1)) x_1 x_2 +  s_1 x_1+ s_2 x_2+3 x_1 x_2(x_1 x_2+5)  +6 (x_1^2+x_2^2))c^3\\
&+4((s_1+s_2) (x_1x_2+ 1) x_1 x_2-  s_1 x_1^2- s_2 x_2^2-3(x_1+x_2)(2x_1x_2+3 ))c^2\\
&+((s_1 x_2+s_2 x_1)x_1 x_2-  (s_1 x_1+s_2 x_2)(4x_1 x_2+3) +3 (x_1^2 x_2^2+  x_1^2+ x_2^2+10  x_1 x_2+7)) c\\
&+3 (s_1 x_1^2+ s_2 x_2^2) -(s_1+s_2) x_1^2 x_2^2-6 (x_1+ x_2)
\end{aligned}
$$

\normalsize
and $h(\pm 1)=\pm 24(1\mp x_1)^2(1\mp x_2)^2$. Thus, equation \eqref{scsc7mnf} always have a solution $c\in (-1,1)$.
In the event that $g_1,g_2\leq 1$, this is predicted by (the proof of) Theorem \ref{bhlt22}  and in the event that $g_1,g_2\geq 1$ (where $\gt^+$ is $2$-dimensional) this is predicted by Corollary 1.7 of \cite{BHL17}.
The $g_1=0$ and $g_2>1$ (or vice versa) case falls outside of these results. Of course, a solution to $h(c)=0$ only corresponds to an actual CSC ray if we also have that the positivity condition of $F_c(\gz)$ is satisfied.

\begin{proposition}\label{2highergenus}
Let $M_\gw$ be a $d=1$ fiber join over $\grS_{g_1}\times \grS_{g_2}$ with its induced Sasakian structure. Then for all $g_1, g_2 \geq 1$, there exists a matrix $K=\tiny
\begin{pmatrix} 
k^1_1 & k^2_1 \\
 \\
k^1_{2} & k^2_{2}.
\end{pmatrix}$ \normalsize such that the entire Sasaki cone of $M_\gw$ is extremal and contains a CSC ray.
\end{proposition}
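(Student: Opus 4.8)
The plan is to exhibit a single matrix $K$ for which \emph{every} ray of $\gt^+_{sph}$ is extremal; the CSC ray is then automatic. Recall from the discussion at the start of this section that, since $g_1,g_2\ge 1$, the cone $\gt^+_{sph}$ is $2$-dimensional and coincides with the full Sasaki cone of $M_\gw$, and that its rays are parametrized up to isotopy by $c\in(-1,1)$ via the $CR$-twist; by Proposition 3.10 of \cite{BHLT22} the ray $\xi_c$ is extremal up to isotopy exactly when $F_c(\gz)>0$ for all $\gz\in(-1,1)$. Thus ``the entire Sasaki cone is extremal'' amounts to
\[
F_c(\gz)>0\qquad\text{for all }(c,\gz)\in(-1,1)\times(-1,1).
\]
Granting this, the CSC ray comes for free: since $h(\pm1)=\pm24(1\mp x_1)^2(1\mp x_2)^2$ have opposite signs, $h$ has a root $c_0\in(-1,1)$, and the extremal ray $\xi_{c_0}$, which then satisfies \eqref{scsc7mnf}, is CSC up to isotopy. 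So it suffices to produce one $K$ realizing the displayed positivity.

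The idea is a reduction to the \emph{flat base} $g_1=g_2=1$. Encode the admissible data by $x_i=\frac{k^i_1-k^i_2}{k^i_1+k^i_2}\in(0,1)$ and $s_i=\frac{2(1-g_i)}{k^i_1-k^i_2}\le0$. For a fixed rational value of $x_i$ there is an infinite arithmetic progression of admissible values of $n_i:=k^i_1-k^i_2$ (those with $k^i_1,k^i_2\in\bbz^+$), and along it $s_i\to0$; moreover $s_i=0$ identically when $g_i=1$. As in the proof of Proposition \ref{Mkprop}, substitute $c=\frac{1-b}{1+b}$, $\gz=\frac{1-y}{1+y}$, which carries $(-1,1)\times(-1,1)$ onto $\bbr^+\times\bbr^+$, into $F_c(\gz)$; clearing the manifestly positive denominators yields a polynomial $P(b,y)=P(b,y;x_1,x_2,s_1,s_2)$ that depends continuously on $(s_1,s_2)$, and the desired positivity of $F_c(\gz)$ on the open square is equivalent to $P(b,y)>0$ for all $b,y>0$. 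The latter holds as soon as the coefficients of $P$, which are polynomials in $x_1,x_2,s_1,s_2$, are all nonnegative with at least one strictly positive.

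Next I would perform the computation \eqref{wextrpol7mnf}--\eqref{wextrpol7mnf2} at $s_1=s_2=0$ and at a convenient pair of values, say $x_1=\tfrac13$ and $x_2=\tfrac15$ (so $k^1_1=2m$, $k^1_2=m$, $k^2_1=3m'$, $k^2_2=2m'$ for positive integers $m,m'$, a non-colinear choice since $\det K=mm'\neq0$): evaluate the elementary integrals $\alpha_{r,-5},\alpha_{r,-6},\beta_{r,-4}$, solve \eqref{wextrpol7mnf2} for $A_1,A_2$, assemble $F_c(\gz)$ via \eqref{wextrpol7mnf}, and verify that the resulting $P(b,y)$ has strictly positive coefficients. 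By continuity of the coefficients in $(s_1,s_2)$ there is then $\gre>0$ such that $P(b,y)$ retains positive coefficients whenever $|s_1|,|s_2|<\gre$ (the sign constraint $s_i\le0$ disposing of any coefficient that is linear in an $s_i$ with vanishing constant term). Finally, given $g_1,g_2\ge1$, choose $m,m'$ large enough that $|s_1|=\frac{2(g_1-1)}{m}<\gre$ and $|s_2|=\frac{2(g_2-1)}{m'}<\gre$; then $K=\left(\begin{smallmatrix}2m&3m'\\ m&2m'\end{smallmatrix}\right)$ has positive integer entries with $k^i_1\neq k^i_2$, and $F_c(\gz)>0$ on the open square, so by the first paragraph the whole Sasaki cone of $M_\gw$ is extremal and contains a CSC ray.

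The main obstacle is the computational core of the previous paragraph: the explicit evaluation of $F_c(\gz)$ at $s_1=s_2=0$ and the verification that the associated $P(b,y)$ has (strictly) positive coefficients --- a finite but heavy symbolic computation (the integrals, the $2\times2$ solve, the polynomial expansion), entirely parallel to, though bulkier than, the analogous computation carried out above for the $5$-dimensional fiber joins over $\grS_g$; in practice one uses computer algebra. A secondary point worth settling is whether the positivity at $s_1=s_2=0$ persists for \emph{all} $x_1,x_2\in(0,1)$ --- which would furnish an open region of admissible matrices $K$ rather than a single pinned choice, at the cost of a genuine multivariable positivity argument --- or whether it is enough to check one convenient pair $(x_1,x_2)$; either way the argument closes, and the integrality and positivity constraints on the $k^i_j$ pose no difficulty since only $m,m'\to\infty$ is required.
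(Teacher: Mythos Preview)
Your strategy is sound in outline but genuinely different from the paper's. The paper fixes a genus-dependent matrix $K=\left(\begin{smallmatrix}10g_1 & 100g_2\\ 2g_1 & g_2\end{smallmatrix}\right)$ and carries out the computation \eqref{wextrpol7mnf}--\eqref{wextrpol7mnf2} \emph{directly} for this $K$: one writes $F_c(\gz)=\frac{(1-\gz^2)p(\gz)}{1212g_1g_2 h_0(c)}$, expands the cubic $p(\gz)$ in powers of $(1+\gz)$, and checks by elementary calculus that each of the resulting quartics $h_{ij}(c)$ is positive on $(-1,1)$. By contrast you fix the \emph{shape} of $K$ (i.e.\ the ratios $x_1,x_2$) and argue by continuity in $(s_1,s_2)\to(0,0)$, exploiting that $n_i=k^i_1-k^i_2$ can be taken arbitrarily large. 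The paper's route gives a single explicit $K$ with a self-contained verification; yours, if the base-case computation goes through, would produce an infinite family of admissible $K$'s and is more conceptual.

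Two cautions about your deferred step. First, the claim that at $s_1=s_2=0$ the polynomial $P(b,y)$ has \emph{strictly} positive coefficients is stronger than mere positivity of $F_c(\gz)$ (which is what \cite{ApMaTF18} guarantees in the torus-base case); already in the $5$-dimensional analogue the paper observes that at $s=0$ the coefficients are only nonnegative, not strictly positive. So some coefficients of $P_0$ will likely vanish, and for those your fallback --- that the sign constraint $s_i\le0$ keeps the perturbed coefficient nonnegative --- requires checking the sign of the linear-in-$s_i$ part case by case, which is another layer of computation you have not sketched. Second, ``by continuity there is $\gre>0$'' is fine, but it yields a non-explicit $\gre$, whereas the paper's choice of $K$ is completely explicit in $(g_1,g_2)$. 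None of this is fatal; it just means the ``heavy symbolic computation'' you flag is heavier and more delicate than the parallel $5$-dimensional one, and until it is actually carried out the argument is a plan rather than a proof.
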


\begin{proof}
Without loss of generality, we assume that $g_2\geq g_1\geq 1$.
First we note that since $g_1,g_2\geq 1 $, the Sasaki cone is of dimension $2$. Thus, the proof will consist of showing that for all $g_2\geq g_1\geq 1$, $\exists$ a two-by-two matrix $K$ such that $\forall c\in (-1,1)$, $F_c(\gz)$ as defined in
\eqref{wextrpol7mnf} is positive for $-1< \gz <1$. Once this is proven we already know from the above discussion that for this such a choice of $K$, \eqref{scsc7mnf} has a solution $c\in (-1,1)$. This $c$ will correspond to a CSC ray.
If $g_1=g_2=1$, the result follows from Theorem \ref{2ndroundexistence}.
Thus, we assume for the rest of the proof that $g_2>1$.
Now, let $K=\tiny
\begin{pmatrix} 
10g_1& 100g_2 \\
 \\
2 g_1 & g_2.
\end{pmatrix}$. 
Using \eqref{wextrpol7mnf}, we can calculate that
$$F_c(\gz)=\frac{(1-\gz^2)p(\gz)}{1212g_1g_2 h_0(c)},$$
where
$$h_0(c)=544829 - 1814364 c + 2225984 c^2 - 1185624 c^3 + 229199 c^4,$$
and $p(\gz)$ is a cubic given by
$$p(\gz)=8g_1g_2h_1(c)+4h_2(c,g_1,g_2)(1+\gz)+2h_3(c,g_1,g_2)(1+\gz)^2+h_4(c,g_1,g_2)(1+\gz)^3,$$
where
$$
\begin{array}{ccl}
h_1(c)&=&h_0(c)\\
\\
h_2(c,g_1,g_2) &=&  6h_{21}(c)+h_{22}(c)(g_2-2)+\left(5h_{23}(c)+h_{24}(c)(g_2-2)\right)(g_1-2)\\
\\
h_3(c,g_1,g_2) &=&  2h_{31}(c)+2h_{32}(c)(g_2-2)+\left(h_{33}(c)+2h_{34}(c)(g_2-2)\right)(g_1-2)\\
\\
h_4(c,g_1,g_2) &=&  10h_{41}(c)+h_{42}(c)(g_2-2)+\left(2h_{43}(c)+h_{44}(c)(g_2-2)\right)(g_1-2)\end{array}
$$
with
$$
\begin{array}{ccl}
h_{21}(c) &=&1849633 - 3952908 c + 2583653 c^2 - 545438 c^3 + 68368 c^4\\
\\
h_{22}(c) &=&5029446 - 10073556 c + 5505031 c^2 - 421486 c^3 - 29519 c^4\\
\\
h_{23}(c) &=&1085299 - 2250304 c + 1327594 c^2 - 148704 c^3 - 11901 c^4\\
\\
h_{24}(c) &=&2453521 - 4733176 c + 2196021 c^2 + 235654 c^3 - 147064 c^4\\
\\
h_{31}(c) &=&173925883 - 629489348 c + 863749558 c^2 - 530449308 c^3 + 
 122385903 c^4\\
 \\
h_{32}(c) &=&86771822 - 314540932 c + 432305747 c^2 - 265928422 c^3 + 61453077 c^4\\
\\
h_{33}(c) &=&169929491 - 609982556 c + 828678836 c^2 - 502956696 c^3 + 
 114452421 c^4\\
 \\
h_{34}(c) &=&42386813 - 152393768 c + 207385193 c^2 - 126091058 c^3 + 28743168 c^4\\
\\
h_{41}(c) &=&72852912 - 233877440 c + 270006303 c^2 - 130233426 c^3 + 21229919 c^4\\
\\
h_{42}(c) &=&365166252 - 1171579852 c + 1351415507 c^2 - 650974422 c^3 + 105863967c^4\\
\\
h_{43}(c) &=&184191678 - 594750598 c + 693107613 c^2 - 339776268 c^3 + 57173843 c^4\\
\\
h_{44}(c) &=&184642524 - 595846924 c + 693799609 c^2 - 339679914 c^3 + 57031029 c^4.
\end{array}
$$
We also notice that $p(1)=4000g_1g_2 h_0(c)$.

\noindent
{\em Claim:} For all $c\in (-1,1)$, $h_0(c)>0$. Further, for all $c\in (-1,1)$, $i=2,3$, and $j=1,2,3,4$, $h_{ij}(c)>0$. 

From this claim it then follows that for $g_1,g_2>1$, all $c\in (-1,1)$, and $i=0,1,2,3$, $h_i(c)>0$.
Thus, in this case, we have $p(\pm 1)>0$, $p'(-1)>0$, and $p''(-1)>0$. Since $p(\gz)$ is a cubic, a moment's thought tells us that
$p(\gz)>0$ for $-1<\gz <1$. Finally, since the claim also tells us that $h_0(c)>0$ for $c\in (-1,1)$, we conclude that
$F_c(\gz)$ is positive for all $c\in (-1,1)$ and $\gz \in (-1,1)$ as desired.

The proof of the claim 
is a standard exercise: For example, one easily checks that
$h_0(\pm 1)>0$, $h_0'(\pm 1)<0$. Further, since  $h_0''(c)$ is a second order polynomial in $c$ with $h_0''(\pm 1)>0$, and $h_0'''(1)<0$,
we know $h_0''(c)>0$ for $c\in (-1,1)$.
Thus for $-1\leq c \leq 1$, $h_0(c)$, is a (concave up and) decreasing function that is positive at $c=\pm 1$. It therefore must be positive for all
$c\in (-1,1)$, as desired. The argument for the claim concerning $h_{ij}(c)$ with $i=2,3$ and $j=1,2,3,4$ is completely similar.

Finally, if $g_1=1$ (and $g_2>1$), 
we still have that $h_0(c)=h_1(c)>0$ for $c\in (-1,1)$. Further,
note that
$$
\begin{array}{ccl}
h_2(c,1,g_2) &=&  \tilde{h}_{21}(c)+5\tilde{h}_{22}(c)(g_2-2)\\
\\
h_3(c,1,g_2) &=&  5\tilde{h}_{31}(c)+2\tilde{h}_{32}(c)(g_2-2),
\end{array}
$$
where
$$
\begin{array}{ccl}
\tilde{h}_{21}(c) &=&5671303 - 12465928 c + 8863948 c^2 - 2529108 c^3 + 469713 c^4\\
\\
\tilde{h}_{22}(c) &=&515185 - 1068076 c + 661802 c^2 - 131428 c^3 + 23509 c^4\\
\\
\tilde{h}_{31}(c) &=&35584455 - 129799228 c + 179764056 c^2 - 111588384 c^3 + 26063877 c^4\\
 \\
\tilde{h}_{32}(c) &=&44385009 - 162147164 c + 224920554 c^2 - 139837364 c^3 + 32709909 c^4
\end{array}
$$
Now, in an exact similar way as above, we can prove that for all $c\in (-1,1)$, $i=2,3$, and $j=1,2$, $\tilde{h}_{ij}(c)>0$.
Therefore we may still conclude that $p(\pm 1)>0$, $p'(-1)>0$, and $p''(-1)>0$ and the proof finishes as above.

\end{proof}

\subsection{$N=\bbp(E) \rightarrow \Sigma_g$, where $E\rightarrow \Sigma_g$ is a polystable rank $2$ holomorphic vector bundle over a compact Riemann surface of genus $g\geq 1$.}

Let $\Sigma_g$ be a compact Riemann surface and let $E\rightarrow \Sigma_g$ be a holomorphic vector bundle. The degree of $E$, is defined by
$deg\,E=\int_{\Sigma_g} c_1(E)$. Then $E$ is {\em stable}(or {\em semistable}) in the sense of Mumford if for any proper coherent
subsheaf $F$, $\frac{deg\, F}{rank\,F} < \frac{deg\, E}{rank\,E}$ (or $\frac{deg\, F}{rank\,F} \leq \frac{deg\, E}{rank\,E}$).
Further, a semistable holomorphic vector bundle, $E$, is called {\em polystable} if it decomposes as a direct sum of stable holomorphic vector bundles, $E=F_1\oplus \cdots\oplus F_l$, such that that $\frac{deg\, F_i}{rank\,F_i} = \frac{deg\, E}{rank\,E}$, for $i=1,\dots,l$.
(See e.g. \cite{Kobbook} for more details on this.)

Assume  $N=\bbp(E) \stackrel{\pi}{\rightarrow} \Sigma_g$, where $E\rightarrow \Sigma_g$ is a polystable rank $2$ holomorphic vector bundle over a compact Riemann surface of genus $g\geq 1$. 
Note that the polystabilty of $E$ is independent of the choice of $E$ in $\bbp(E)$. 
Indeed, by the theorem of Narasimhan and Seshadri \cite{NaSe65}, polystability of $E$ is equivalent to $\bbp(E) \stackrel{\pi}{\rightarrow} \Sigma_g$ admitting a flat projective unitary connection which in turn is equivalent to $N$ admitting a local product K\"ahler metric induced by constant scalar curvature K\"ahler metrics on $\Sigma_g$ and $\bbc\bbp^1$. We shall explain and explore the latter in more detail below. Likewise,
the condition of whether $deg E$ is even ($E$ spin) or odd ($E$ is non-spin), is independent of the choice of $E$. Unless $E$ is decomposable, we must have that $Aut(N,J)$ is discrete (\cite{Mar71}).

Let ${\mathbf v}=c_1(VP(E))\in H^2(N,\bbz)$ denoted the Chern class of the vertical line bundle and let ${\mathbf f}\in H^2(N,\bbz)$ denote the Poincar\'e dual of the fundamental class of a fiber of $\bbp(E) \rightarrow \Sigma_g$. From e.g. \cite{Fuj92} we know that if ${\mathbf h}\in  H^2(N,\bbz)$ denote the Chern class of the ($E$-dependent) tautological line bundle on $N$, then
$H^2(N,\bbz)=\bbz {\mathbf h} \oplus \bbz {\mathbf f}$ and ${\mathbf v} = 2{\mathbf h}+(deg E){\mathbf f}$.

Due to the fact that $N=\bbp(E) \stackrel{\pi}{\rightarrow} \Sigma_g$ admits a flat projective unitary connection, we know that $N$ has a universal cover $\tilde{N} =\bbc\bbp^1\times\tilde{\Sigma_g}$ (where $\tilde{\Sigma_g}$ is the universal cover of $\Sigma_g$). 
Let $\Omega_1$ denote the standard Fubini-Study area form on $\bbc\bbp^1$ and let $\Omega_2$ denote a standard CSC area form on $\Sigma_g$. Now consider the projection $\pi_1: \bbc\bbp^1\times\tilde{\Sigma_g}\rightarrow \bbc\bbp^1$ to the first factor. Then $\pi_1^*(\Omega_1)$ descends to a closed $(1,1)$ form on $N$ representing the class ${\mathbf v}/2$ and $[\pi^*\Omega_2]={\mathbf f}$.

If we (abuse notation slightly and) think of $q_1\Omega_1+q_2\Omega_2$ as a local product of CSC K\"ahler forms on $N$, then this represents the cohomology class 
$\frac{q_1}{2}{\mathbf v} + q_2{\mathbf f}=q_1{\mathbf h} +(\frac{q_1}{2}(deg E)+q_2){\mathbf f}$. If $deg E$ is even, this class is in $H^2(N,\bbz)$ (and hence can represent a holomorphic line bundle) precisely when $q_1,q_2\in \bbz$. If $deg E$ is odd, then the class is in $H^2(N,\bbz)$  iff ($q_1$ is an even integer and $q_2\in \bbz$) or
($q_1$ is an odd integer and $(q_2-1/2)\in \bbz$). Note that a similar discussion appears in the proof of Theorem 4.6 of \cite{ACGT08b}.

With this in mind, we can we can (yet again) generalize to consider the case where $N$ is as described above.
We consider a matrix 
$K=
\begin{pmatrix} 
k^1_1 & k^2_1 \\
 \\
k^1_{2} & k^2_{2}.
\end{pmatrix}
$,
consisting of entries $k^i_j$, such that:
\begin{itemize}
\item If $deg E$ is even, $k^i_j \in \bbz^+$
\item If $deg E$ is odd, one of the following is true:
\begin{itemize}
\item  $k^1_j$ is an even positive integer and $k^2_j\in \bbz^+$ 
\item $k^1_j$ is an odd positive integer and $(k^2_j-1/2)\in \bbz^+$.
\end{itemize}
\end{itemize}
Such a choice of $K$ yields
a $d=1$ Yamazaki fiber join $M_\gw=S(L_1^*\oplus L_2^*)$ via the line bundles 
$L_1, L_2$ satisfying $c_1(L_j)=[\gro_j] = k^1_j[\grO_1] +k^2_j[\grO_2]=k^1_j{\mathbf h} +(\frac{k^1_j}{2}(deg E)+k^2_j){\mathbf f}$. As before we assume that $k^i_1\neq k^i_2$ for $i=1,2$.

As we know, the quotient complex manifold of $M_\gw$  arising from the regular Sasakian structure with Reeb vector field $\xi_{\mathbf 1}$ is equal to 
the following  $\bbc\bbp^1$ bundle over $N$:
$
\bbp\bigl(L_1^*\oplus L_2^*) = \bbp\bigl({\BOne} \oplus L_1\otimes L_2^*\bigr), 
$
with 
$c_1( L_1\otimes L_2^*)=(k^1_1-k^1_2)[\grO_1] +(k^2_1-k^2_2)[\grO_2]=(k^1_1-k^1_2){\mathbf h} +(\frac{(k^1_1-k^1_2)}{2}(deg E)+(k^2_1-k^2_2)){\mathbf f}$.

Similarly, as before, the regular quotient K\"ahler class is, up to scale, equal to the admissible
K\"ahler class  $2\pi(\frac{k^1_1-k^1_2}{x_1}[\grO_1] +\frac{k^2_1-k^2_2}{x_2}[\grO_2]) + \Xi)$ where
$x_1=\frac{k^1_1-k^1_2}{k^1_1+k^1_2}, \quad x_2=\frac{k^2_1-k^2_2}{k^2_1+k^2_2}$.

We can now adapt the set-up from Section \ref{highergenusprod} with $s_1=\frac{2}{k^1_1-k^1_2}$ and $s_2=\frac{2(1-g)}{k^2_1-k^2_2}$.
In particular, equation \eqref{scsc7mnf} continues to have some solution $c\in (-1,1)$ and we can calculate $F_c(\gz)$ using \eqref{wextrpol7mnf}. 
If a choice of $K$ satisfies that $F_c(\gz)$ is positive for all $c\in (-1,1)$ and $\gz \in (-1,1)$, then we will have a  conclusion similar to the result in 
Proposition \ref{2highergenus}.  Indeed, we have the following proposition. 

\begin{proposition}\label{polyprop}
Let $N=\bbp(E) \stackrel{\pi}{\rightarrow} \Sigma_g$, where $E\rightarrow \Sigma_g$ is a polystable rank $2$ holomorphic vector bundle over a compact Riemann surface of genus $g\geq 1$. Let 
$K=\tiny
\begin{pmatrix} 
k^1_1 & k^2_1 \\
 \\
k^1_{2} & k^2_{2}
\end{pmatrix} = \begin{pmatrix} 
10 g & 100 g\\
 \\
2g & g
\end{pmatrix}$ \normalsize
and let $M_\gw$ be the $d=1$ fiber join over $N$ as described above with its induced Sasakian structure. Then the entire subcone, $\gt^+_{sph}$, is extremal and contains a CSC ray.

In particular, if $E$ is indecomposable, then the entire Sasaki cone of $M_\gw$ is extremal and contains a CSC ray.
\end{proposition}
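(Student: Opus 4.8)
The plan is to follow, almost verbatim, the argument of Proposition \ref{2highergenus}, with the $\bbc\bbp^1$-factor of the local product playing the role that $\Sigma_{g_1}$ plays there. First I would check that the matrix $K$ of the statement is admissible in the sense required above: the entries $k^1_1=10g$ and $k^1_2=2g$ are even positive integers while $k^2_1=100g$ and $k^2_2=g$ are positive integers, so $K$ yields a legitimate $d=1$ Yamazaki fiber join whether $\deg E$ is even or odd, and $k^i_1\neq k^i_2$ for $i=1,2$. By the theorem of Narasimhan and Seshadri \cite{NaSe65}, polystability of $E$ endows $N$ with a local K\"ahler product structure built from the Fubini--Study metric on the $\bbc\bbp^1$-factor and a CSC metric on $\Sigma_g$, so the entire apparatus of Section \ref{highergenusprod} applies, now with $x_1=\tfrac23$, $x_2=\tfrac{99}{101}$, $s_1=\tfrac{2}{k^1_1-k^1_2}=\tfrac{1}{4g}$ and $s_2=\tfrac{2(1-g)}{k^2_1-k^2_2}=\tfrac{2(1-g)}{99g}$. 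Since $d=1$, the cone $\gt^+_{sph}$ is $2$-dimensional, so the conclusion ``entirely extremal, with a CSC ray'' reduces to a single statement: that $F_c(\gz)>0$ for all $c,\gz\in(-1,1)$, where $F_c$ is the polynomial of \eqref{wextrpol7mnf}. Indeed, $h(\pm1)=\pm24(1\mp x_1)^2(1\mp x_2)^2$ changes sign across the endpoints, so \eqref{scsc7mnf} (that is, $h(c)=0$) has a root $\hat c\in(-1,1)$, and positivity of $F_{\hat c}$ upgrades that root to an actual CSC Sasaki ray.

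To establish the positivity I would substitute the above values of $x_i$ and $s_i$ into \eqref{wextrpol7mnf} and simplify. As in the proof of Proposition \ref{2highergenus}, I expect $F_c(\gz)$ to factor in the form
\[
F_c(\gz)=\frac{(1-\gz^2)\,p(\gz)}{C(g)\,h_0(c)},
\]
where $C(g)>0$, $h_0(c)$ is a polynomial in $c$ that is positive on $(-1,1)$, and $p(\gz)$ is a cubic in $\gz$ of the shape
\[
p(\gz)=a_0(c,g)+a_1(c,g)(1+\gz)+a_2(c,g)(1+\gz)^2+a_3(c,g)(1+\gz)^3,
\]
each coefficient $a_i$ (and $h_0$) being, after clearing denominators, a polynomial in $c$ whose coefficients are affine in $g$. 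Exactly as there, each such one-variable polynomial is shown to be positive on $(-1,1)$ for all $g\ge1$ by the elementary endpoint/derivative test: evaluate the polynomial and enough of its successive derivatives at $c=\pm1$, then use concavity or monotonicity of the leading derivatives to fix the sign on the open interval. Granting this, one obtains $p(\pm1)>0$, $p'(-1)>0$ and $p''(-1)>0$; since $p$ is a cubic this forces $p>0$ on $(-1,1)$, and together with $h_0(c)>0$ it gives $F_c(\gz)>0$ on $(-1,1)^2$. By the first paragraph this proves that $\gt^+_{sph}$ is entirely extremal and contains a CSC ray.

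For the ``in particular'' statement, assume $E$ is indecomposable. Then $\Aut(N,J)$ is discrete by Maruyama \cite{Mar71}. Taking the regular Sasakian structure of $M_\gw$ as base point, its quotient is the $\bbc\bbp^1$-bundle $\bbp(\BOne\oplus L_1\otimes L_2^*)\to N$ with $L_1\otimes L_2^*$ non-trivial; since $\Aut(N,J)$ supplies no continuous symmetry, the maximal torus of the automorphism group of this quotient is the one-dimensional fibrewise scaling, and hence (adjoining the Reeb circle) the maximal torus of the automorphism group of the Sasakian structure on $M_\gw$ is the two-dimensional fibre torus. Therefore $\gt^+(\cald,J)=\gt^+_{sph}$, and the second assertion is immediate from the first.

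The main obstacle is the positivity verification of the second paragraph. It is not a mere transcription of Proposition \ref{2highergenus}: there $s_1,s_2\le0$ throughout, whereas here $s_1=\tfrac{1}{4g}>0$, so the sign estimates for the coefficient polynomials must be re-examined in this regime; and the coefficients now genuinely vary with $g$ through $s_2=\tfrac{2(1-g)}{99g}$, so every positivity claim has to hold uniformly in $g\ge1$ (with $g=1$, where $s_2=0$, possibly inspected separately). What makes this tractable is that $s_1$ decays like $1/g$ and that, after clearing denominators, the $g$-dependence of each coefficient is affine, so the uniform estimates once more collapse to a finite list of sign checks at $c=\pm1$, precisely as in Proposition \ref{2highergenus}.
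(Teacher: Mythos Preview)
Your proposal is correct and follows essentially the same route as the paper's own proof: verify that $K$ is admissible for either parity of $\deg E$, reduce everything to the positivity of $F_c(\gz)$ on $(-1,1)^2$, factor $F_c(\gz)=(1-\gz^2)p(\gz)/\bigl(C(g)h_0(c)\bigr)$ with $p$ a cubic in $(1+\gz)$ whose coefficients are affine in $g$, and then apply the endpoint/derivative test to the one-variable polynomials in $c$. The only noteworthy difference is that the paper dispatches the case $g=1$ by quoting an existing result (Theorem~3.1 of \cite{ApMaTF18}) rather than folding it into the computation, and for $g\ge2$ organizes the coefficients as $h_{i1}(c)+h_{i2}(c)(g-2)$ so that the positivity of each $h_{ij}$ on $(-1,1)$ suffices; your remark that $g=1$ may need a separate look is exactly on target.
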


\begin{proof}
First we notice that $k^1_j$ is even for $j=1,2$ and thus this choice of $K$ is allowed whether or not $E$ is spin. Second, we have that the set of rays
in $\gt^+_{sph}$ is parametrized by $c\in (-1,1)$ in the same manner as in Section \ref{highergenusprod}.
Further, in the case where $E$ is indecomposable, $Aut(N,J)$ is discrete and thus the Sasaki cone is exactly $\gt^+_{sph}$.
Therefore all we need to do to prove the proposition is to check that for this choice of $K$, the polynomial
$F_c(\gz)$, defined by \eqref{wextrpol7mnf}, is positive for all $c\in (-1,1)$ and $\gz \in (-1,1)$. 

If $g=1$, we already know from (the proof of) Theorem 3.1 in \cite{ApMaTF18} that for any choice of $K$, $F_c(\gz)>0$ for all $c\in (-1,1)$ and $\gz \in (-1,1)$. 
Thus we will assume that $g>1$ for the rest of the proof.

By direct calculations we get that
$$F_c(\gz) = \frac{(1-\gz^2)p(\gz)}{1212 g h_0(c)},$$
where
$$h_0(c) = 544829 - 1814364 c + 2225984 c^2 - 1185624 c^3 + 229199 c^4$$
and $p(\gz)$ is a cubic in $\gz$ that we may write as 
$$
\begin{array}{ccl}
p(\gz) & = & 8 g h_1(c) + \left(4h_{21}(c)+20h_{22}(c)(g-2)\right)(\gz+1) + \left(2h_{31}(c)+4h_{32}(c)(g-2)\right)(\gz+1)^2 \\
\\
&+& \left(h_{41}(c)+2h_{42}(c)(g-2)\right)(\gz+1)^3,
\end{array}
$$
where
$$
\begin{array}{ccl}
h_1(c)&=&h_0(c)\\
\\
h_{21}(c)&=&5793707 - 13073132 c + 9976937 c^2 - 3421902 c^3 + 734322 c^4\\
\\
h_{22}(c)&=&515185 - 1068076 c + 661802 c^2 - 131428 c^3 + 23509 c^4\\
\\
h_{31}(c)&=&181918667 - 668502932 c + 933891002 c^2 - 585434532 c^3 + 
 138252867 c^4\\
\\
h_{32}(c)&=&44385009 - 162147164 c + 224920554 c^2 - 139837364 c^3 + 32709909 c^4\\
\\
h_{41}(c)&=&356026968 - 1129159208 c + 1277664093 c^2 - 594396318 c^3 + 
 89753413 c^4\\
\\
h_{42}(c)&=&90261864 - 287866464 c + 328807949 c^2 - 155647254 c^3 + 24416469 c^4.\\
\end{array}
$$
Note also that $p(1)=4000gh_0(c)$.

Completely similar to the way the claim at the end of the proof of Proposition \ref{2highergenus} is verified, we can now show that
for all $c\in (-1,1)$, $h_0(c)>0$ and for all $c\in (-1,1)$, $i=2,3$, and $j=1,2$, $h_{ij}(c)>0$. 
This tells us that $p(\pm 1)>0$, $p'(-1)>0$, and $p''(-1)>0$. Since $p(\gz)$ is a cubic, we conclude that
$p(\gz)>0$ for $-1<\gz <1$. Finally, since $h_0(c)>0$ for $c\in (-1,1)$, 
$F_c(\gz)$ is positive for all $c\in (-1,1)$ and $\gz \in (-1,1)$ as desired.

\end{proof}

\begin{remark}
Note that if we fix a matrix $K$ and calculate $F_c(\gz)$, then we can observe that
$$\lim_{g\rightarrow +\infty} F_0(0)=-\infty.$$
Thus is it clear that for any choice of $K$ there exist values $g>1$ such that 
the corresponding Sasaki cone is NOT exhausted by extremal Sasaki metrics.

Experimenting with Mathematica, it seems that e.g. choosing $K=\begin{pmatrix} 
4 g & 3 g\\
 \\
2g & g
\end{pmatrix}$ would also yield a $F_c(\gz)$ such that $F_c(\gz)$ is positive for all $c\in (-1,1)$ and $\gz \in (-1,1)$,
but the argument would be relying on using Mathematica to calculate the numerical values of the real roots of certain fourth degree polynomials.
For the sake of a transparent argument we chose a more optimal $K$ to do the job in the proof above.
\end{remark}

\def\cprime{$'$} \def\cprime{$'$} \def\cprime{$'$} \def\cprime{$'$}
  \def\cprime{$'$} \def\cprime{$'$} \def\cprime{$'$} \def\cprime{$'$}
  \def\cdprime{$''$} \def\cprime{$'$} \def\cprime{$'$} \def\cprime{$'$}
  \def\cprime{$'$}
\providecommand{\bysame}{\leavevmode\hbox to3em{\hrulefill}\thinspace}
\providecommand{\MR}{\relax\ifhmode\unskip\space\fi MR }
\providecommand{\MRhref}[2]{%
  \href{http://www.ams.org/mathscinet-getitem?mr=#1}{#2}
}
\providecommand{\href}[2]{#2}

\end{document}